\documentclass[a4paper,10pt]{article}

\usepackage{mathrsfs}
\usepackage{amsmath}
\usepackage{bm}
\usepackage{amssymb}
\usepackage{makeidx}
\makeindex
\usepackage[dvipsnames]{xcolor}
\usepackage[colorlinks=true, linkcolor=blue, citecolor=blue, urlcolor=blue]{hyperref}
\usepackage{titlesec}
\usepackage{amsthm}
\usepackage{setspace}
\usepackage{pifont}
\usepackage{esint}
\usepackage{graphicx}
\usepackage{float}
\usepackage{enumerate}
\numberwithin{equation}{section}
\usepackage[backend=biber,style=alphabetic,sorting=nyt]{biblatex}
\DeclareFieldFormat[article]{title}{#1}
\renewbibmacro{in:}{}
\DeclareFieldFormat{pages}{#1}
\DeclareFieldFormat[article]
{journaltitle}{\mkbibemph{#1}\addcomma}
\renewbibmacro*{volume+number+eid}{%
	\printfield{volume}%
	\setunit{\addspace}%
	\printfield{eid}%
}
\renewbibmacro*{eprint}{%
	\iffieldundef{eprint}
	{}
	{%
		\setunit{\addcomma\space}
		\printtext{%
			\printfield[eprint:arxiv]{eprint}}%
	}%
}

\titleformat{\section}[block]{\bfseries\Large}{\thesection}{1em}{}

\newtheoremstyle{mystyle}
{}{}{\upshape}{}{\bfseries}{.}{.5em}{}
\theoremstyle{mystyle}
\newtheorem{theorem}{Theorem}[section]

\newtheorem{proposition}[theorem]{Proposition}
\newtheorem{coro}[theorem]{Corollary}
\newtheorem{lemma}[theorem]{Lemma}

\newtheorem{remark}{Remark}[section]
\theoremstyle{plain}

\begin{document} 
	
\title{\large
	Large-Time Asymptotics for Heat and Fractional Heat Equations\\ on the Lattice and General Finite Subgraphs}

\author{\small Rui Chen,\qquad Bo Li }
\date{}
\maketitle
\thispagestyle{empty}
\pagenumbering{arabic}

\noindent\textbf{Abstract:}
In this paper, we study large-time asymptotics for heat and fractional heat equations in two discrete settings: the full lattice \(\mathbb Z^d\) and finite connected subgraphs with Dirichlet boundary condition. These results provide a unified discrete theory of long-time asymptotics for local and nonlocal diffusions.

For \(d\ge1\) and \(s\in(0,1]\), we consider on \(\mathbb Z^d\) the Cauchy problem
\[
\partial_t u+(-\Delta)^s u=0,\qquad u(0)=u_0\in \ell^1(\mathbb Z^d),
\]
and derive a precise first-order asymptotic expansion toward the lattice fractional heat kernel \(G_t^{(s)}\). The main technical input is a pair of sharp translation-increment bounds for \(G_t^{(s)}\): a pointwise estimate and an \(\ell^1\)-estimate. As consequences, under finite first moment we obtain the optimal decay rate \(t^{-1/(2s)}\) in \(\ell^p\)-asymptotics (\(1\le p\le\infty\)), and we prove sharpness by explicit shifted-kernel examples. Without moment assumptions, we still establish convergence in the full \(\ell^1\)-class, and we show that no universal quantitative rate can hold in general.

We also analyze fractional Dirichlet diffusion on finite connected subgraphs (restricted fractional setting, including \(s=1\) as the local case). In this finite-dimensional framework, solutions admit spectral decomposition and exhibit exponential large-time behavior governed by the principal eigenvalue and the spectral gap. In addition, we study positivity improving properties of the associated semigroups for both the lattice and Dirichlet evolutions.

\medskip
\noindent \textbf{Keywords:} Discrete Laplacian, Fractional Laplacian, Heat Equation, Integer Lattice, Large-time Convergence.

\medskip
\noindent {\small\bf AMS Subject Classifications:} 35B40, 35K05, 35R11, 35K08, 39A12

\bigskip

\tableofcontents
\thispagestyle{empty}

\setcounter{page}{1}
\section{Introduction and Main Results}

The aim of this paper is to investigate the large-time asymptotic behavior of heat and fractional heat equations in two related discrete settings: the whole lattice \(\mathbb Z^d\) and finite connected subgraphs with Dirichlet exterior condition.

Let \(d\ge 1\) and \(s\in(0,1]\).  
On  \(\mathbb Z^d\), for initial data \(u_0\in \ell^1(\mathbb Z^d)\), we consider the Cauchy problem
\begin{equation}\label{eq:cauchy-frac-lap}
\begin{cases}
\partial_t u(t,x)+(-\Delta)^s u(t,x)=0, & t>0,\ x\in\mathbb Z^d,\\[1mm]
u(0,x)=u_0(x), & x\in\mathbb Z^d.
\end{cases}
\end{equation}
Here \((-\Delta)^s\) (\(0<s\le1\)) denotes the fractional power  of the discrete Laplacian on \(\mathbb Z^d\), defined via Fourier multiplier
\[
\widehat{(-\Delta)^s f}(\xi)=\omega(\xi)^s\widehat f(\xi),
\qquad
\omega(\xi):=4\sum_{j=1}^d\sin^2\!\Big(\frac{\xi_j}{2}\Big),
\quad \xi\in\mathbb T^d:=[-\pi,\pi]^d.
\]
The Cauchy problem (\ref{eq:cauchy-frac-lap}) admits a unique solution in \(\ell^1(\mathbb Z^d)\), given by (see Section \ref{Foundations of Discrete Operators})
\[
u(t,x)=G_t^{(s)}*u_0,\qquad t\ge0,
\]
where
\begin{equation}\label{gaussian}
    G_t^{(s)}(x)
=
\frac{1}{(2\pi)^{d}}
\int_{\mathbb T^d}
e^{-t\omega(\xi)^s}e^{i\langle x,\xi\rangle}\,d\xi,
\qquad x\in\mathbb Z^d.
\end{equation}

In parallel, let \(G=(V,E,\mu,w)\) be a locally finite weighted connected graph, and let \(\Omega\subset V\) be a finite connected subset.  
For \(s\in(0,1]\), we denote by \(L_{\Omega,s}^D\) the Dirichlet diffusion operator on \(\Omega\): for \(0<s<1\), \(L_{\Omega,s}^D\) is the restricted Dirichlet fractional Laplacian (zero extension outside \(\Omega\), then restriction back to \(\Omega\)); for \(s=1\), \(L_{\Omega,1}^D\) is the usual Dirichlet graph Laplacian, see Section \ref{Setting and Equation}. We study
\begin{equation}\label{eq:finite-dirichlet-frac-heat-restricted}
\begin{cases}
\partial_t u(t,x)+L_{\Omega,s}^D u(t,x)=0,& t>0,\ x\in\Omega,\\
u(t,x)=0,& t>0,\ x\in V\setminus\Omega,\\
u(0,x)=u_0(x),& x\in\Omega.
\end{cases}
\end{equation}

In the following, we will show that the two models exhibit different asymptotic mechanisms. On \(\mathbb Z^d\), decay is polynomial and governed by the lattice fractional heat kernel, with sharp first-order correction controlled by the first moment of \(u_0\). On finite subgraphs, the spectrum is discrete and the long-time profile is the first eigenvalue, with exponential convergence rate given by the spectral gap.

It is well known that diffusion phenomena are ubiquitous in natural sciences, and parabolic equations provide a fundamental mathematical framework to describe their evolution. 
From the PDE viewpoint, one of the central questions is the \emph{large-time asymptotic behavior}: after long time, what profile dominates the solution, and at which rate does the convergence occur?

In the classical case (\(s=1\)), the heat equation in \(\mathbb R^d\),
\[
\partial_t u-\Delta u=0,\qquad u(0,\cdot)=u_0\in L^1(\mathbb R^d),
\]
admits the convolution representation 
\[
u(t,x)=\big(G_t^{(1)}*u_0\big)(x),\qquad 
G_t^{(1)}(x)=(4\pi t)^{-d/2}e^{-|x|^2/(4t)}.
\]
Observe that integrating over all of $\mathbb R^d$, we obtain that the total mass of solutions is conserved for all time, that is,
\[
M:=\int_{\mathbb R^d}u_0(x)\,dx=\int_{\mathbb R^d}u(t,x)\,dx
\]
for all \(t>0\), and \(u(t,\cdot)\) converges to \(M G_t^{(1)}\) in the classical self-similar scaling sense; equivalently, for every \(1\le p\le\infty\),
\[
t^{\frac d2\left(1-\frac1p\right)}
\|u(t)-M G_t^{(1)}\|_{L^p(\mathbb R^d)}\longrightarrow 0,
\qquad t\to\infty.
\]
This is a standard cornerstone in the asymptotic theory of the heat equation; see, e.g., \cite{vazquez2017asymptotic}.

A parallel theory holds for the fractional heat equation (\(0<s<1\)):
\[
\partial_t u+(-\Delta)^s u=0,\qquad u(0,\cdot)=u_0\in L^1(\mathbb R^d),
\]
whose solution is
\[
u(t,x)=\big(P_t^{(s)}*u_0\big)(x),
\]
where \(P_t^{(s)}\) is the fractional heat kernel (stable density), characterized by $\widehat{P_t^{(s)}}(\xi)=e^{-t|\xi|^{2s}}.$
Again, mass is conserved, and the large-time profile is \(M P_t^{(s)}\). 
More precisely, for all \(1\le p\le\infty\),
\[
t^{\frac d{2s}\left(1-\frac1p\right)}
\|u(t)-M P_t^{(s)}\|_{L^p(\mathbb R^d)}\to0.
\]
See, for instance, \cite{vazquez2018asymptotic}.

A finer first-order asymptotic expansion is available under finite first absolute moment:
\[
\mathcal N_1(u_0):=\int_{\mathbb R^d}|x|\,|u_0(x)|\,dx<\infty.
\]
In that case one has the quantitative estimates
\[
t^{d/(2s)}\|u(t)-M P_t^{(s)}\|_{L^\infty(\mathbb R^d)}
\le C\,\mathcal N_1(u_0)\,t^{-1/(2s)}, \quad s\in (0,1]
\]
and
\[
\|u(t)-M P_t^{(s)}\|_{L^1(\mathbb R^d)}
\le C\,\mathcal N_1(u_0)\,t^{-1/(2s)},\quad s\in (0,1]
\]
with corresponding \(L^p\)-versions by interpolation. Moreover, the rate \(t^{-1/(2s)}\) is optimal within this first-moment class; see \cite{vazquez2017asymptotic,vazquez2018asymptotic}.

In bounded domains with homogeneous Dirichlet condition, the long-time behavior is no longer self-similar as in $\mathbb R^d$. 
Instead, it is governed by the spectral structure of the Dirichlet operator: the first mode dominates as $t\to\infty$, and the remainder decays at an exponential rate determined by the spectral gap $\lambda_2-\lambda_1$. 
Hence the convergence to the principal profile is exponential, in sharp contrast with the algebraic decay rates in the whole-space setting. See, e.g., \cite{chasseigne2006asymptotic,bonforte2014existence}.

The Euclidean large-time theory has natural counterparts on curved spaces, where geometry can substantially modify both asymptotic profiles and decay mechanisms.
A classical model case is $\mathbb H^n$, where the asymptotics differ from the Euclidean self-similar regime due to geometric effects \cite{vazquez2022asymptotic}.

On complete noncompact manifolds, large-time asymptotics has been analyzed under suitable Ricci curvature assumptions, highlighting geometry as a decisive factor in the long-time regime \cite{li1986large,grigor2023asymptotic}; for broader classes, notably noncompact Riemannian symmetric spaces, representation-theoretic methods yield sharper results for both heat and fractional heat equations, including quantitative \(L^p\)-asymptotic estimates \cite{anker2023asymptotic,papageorgiou2024asymptotic,naik2024p,papageorgiou2024large}.

Compared with the Euclidean and manifold settings, large-time asymptotics for diffusion equations on graphs is still relatively less developed, especially for lattice graphs and nonlocal operators. 
Related analytical tools for the discrete Laplacian can be found in \cite{ciaurri2023harmonic}. 
For nonlocal discrete models, including the fractional discrete Laplacian together with regularity and applications, see \cite{ciaurri2018nonlocal}. For discrete evolutions, asymptotic questions have been addressed, for instance, in the study of the discrete-in-time heat equation \cite{abadias2022asymptotic}. 
On metric graphs, several works analyze diffusion asymptotics for both local and nonlocal models, including general diffusion problems and configurations with infinite edges, where effective reduced dynamics may emerge at large time \cite{ignat2021asymptotic}. 
For the one-dimensional lattice $\mathbb Z$, refined large-time behavior for the discrete heat equation (including moment methods and decay rates) has also been obtained \cite{abadias2021large}. 
Nevertheless, a unified asymptotic theory for discrete Laplacian and fractional Laplacian flows on higher-dimensional lattices, as well as on finite subgraphs with Dirichlet boundary conditions, remains largely open.

 These previous asymptotic results in different background provide the starting point for our discrete framework. 
In this paper, we derive their counterparts on \(\mathbb Z^d\), obtain sharp first-order decay rates together with optimality, and then highlight the qualitative transition to finite subgraphs with Dirichlet boundary conditions, where the large-time behavior is governed by exponential spectral decay.

Our first step is to establish sharp translation-increment bounds for the lattice fractional heat kernel \(G_t^{(s)}\), which are the main technical inputs for the first-order asymptotic analysis. 
Unlike the Euclidean stable kernel, \(G_t^{(s)}\) on \(\mathbb Z^d\) does not satisfy an exact self-similar scaling law, which makes the proof substantially more delicate. 
Our strategy is to recast the problem into the analysis of the associated Fourier multipliers on \(\mathbb T^d\).

Here and throughout, for \(x=(x_1,\dots,x_d)\in\mathbb Z^d\), we set
\[
|x|:=\bigl(x_1^2+\cdots+x_d^2\bigr)^{1/2}.
\]

\begin{proposition}
\label{prop:kernel-increment-pointwise}
Let \(d\ge1\), \(s\in(0,1]\). Then there exists \(C=C(d,s)>0\) such that for all \(t>0\), \(x,y\in\mathbb Z^d\),
\begin{equation}
\label{eq:linfty-difference-kernel-sharp}
\bigl|G_t^{(s)}(x-y)-G_t^{(s)}(x)\bigr|
\le
C\,t^{-\frac d{2s}}
\min\!\left\{1,\ |y|\,t^{-\frac1{2s}}\right\},
\end{equation}
where $G_t^{(s)}$ is defined in (\ref{gaussian}).
\end{proposition}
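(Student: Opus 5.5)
Both bounds come directly from the Fourier representation (\ref{gaussian}). Writing
\[
G_t^{(s)}(x-y)-G_t^{(s)}(x)=\frac{1}{(2\pi)^d}\int_{\mathbb T^d}e^{-t\omega(\xi)^s}e^{i\langle x,\xi\rangle}\bigl(e^{-i\langle y,\xi\rangle}-1\bigr)\,d\xi,
\]
it suffices to prove two estimates, uniformly in \(x\) and \(y\):
\[
\|G_t^{(s)}\|_{\ell^\infty}\le C t^{-d/(2s)}, \qquad |G_t^{(s)}(x-y)-G_t^{(s)}(x)|\le C|y|\,t^{-(d+1)/(2s)}.
\]
The ``\(1\)'' branch of the minimum then follows from the first estimate and the triangle inequality. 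The ``\(|y|t^{-1/(2s)}\)'' branch follows from the second estimate, using \(|e^{-i\langle y,\xi\rangle}-1|\le |y||\xi|\).

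\textbf{Reduction to two moment integrals.} After the reduction above, both estimates follow from bounding
\[
I_k(t):=\int_{\mathbb T^d}|\xi|^k e^{-t\omega(\xi)^s}\,d\xi,\qquad k=0,1,
\]
and showing \(I_k(t)\le C\,t^{-(d+k)/(2s)}\) for \(t\ge1\). The key input is the elementary inequality \(\sin^2(\theta/2)\ge \theta^2/\pi^2\) for \(|\theta|\le\pi\). It gives \(\omega(\xi)\ge c_0|\xi|^2\) on \(\mathbb T^d\) with \(c_0=4/\pi^2\), and hence \(e^{-t\omega(\xi)^s}\le e^{-c_0^s t|\xi|^{2s}}\). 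Extending the integral to all of \(\mathbb R^d\) and substituting \(\xi=t^{-1/(2s)}\eta\), we get
\[
I_k(t)\le t^{-(d+k)/(2s)}\int_{\mathbb R^d}|\eta|^k e^{-c_0^s|\eta|^{2s}}\,d\eta,
\]
and this last integral is finite for every \(s>0\). This proves both estimates for \(t\ge1\).

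\textbf{Small times.} For \(0<t<1\) the scaling substitution is no longer usable, since \(\mathbb T^d\) is not dilation invariant. Here we use the trivial bounds instead: \(|G_t^{(s)}(x)|\le 1\) and \(I_1(t)\le C\). Since \(t^{-d/(2s)}\ge1\) when \(t<1\), the first estimate holds. For the increment, \(|G_t^{(s)}(x-y)-G_t^{(s)}(x)|\le 2\le 2t^{-d/(2s)}\) covers the ``\(1\)'' branch. Note that the bound \(C|y|\) alone would not suffice, because \(|y|t^{-(d+1)/(2s)}\) could in principle be smaller than \(|y|\); but this never matters. Indeed, \(\min\{1,|y|t^{-1/(2s)}\}=1\) whenever \(|y|\ge1\) and \(t<1\), and \(y\in\mathbb Z^d\setminus\{0\}\) forces \(|y|\ge1\). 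For \(y=0\) the left-hand side vanishes. So for \(t<1\) the claim reduces to the trivial bound \(2\le 2t^{-d/(2s)}\).

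\textbf{Main obstacle.} The only substantive step is the comparison \(\omega(\xi)^s\gtrsim |\xi|^{2s}\) on the whole torus. This comparison substitutes for the missing exact self-similarity of the lattice kernel. It holds uniformly with no loss near the corners of \(\mathbb T^d\), and this uniformity is what makes the Euclidean scaling argument go through. The small-time case is purely bookkeeping. It uses integrality of \(y\) in the way described above.
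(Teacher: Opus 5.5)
Your argument is correct and is essentially the paper's: Fourier representation, the lower bound $\omega(\xi)\ge \frac{4}{\pi^2}|\xi|^2$ on $\mathbb T^d$, and rescaling. The one real difference is that you bound $|e^{-i\langle y,\xi\rangle}-1|\le |y||\xi|$ directly, whereas the paper proves a unit-step estimate and then telescopes along a nearest-neighbor path.

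Your small-time case is valid but unnecessary. Once the integral is extended to $\mathbb R^d$, the substitution works for every $t>0$, as in the paper. Also, contrary to your remark, for $t<1$ one has $t^{-(d+1)/(2s)}\ge 1$, so the bound $C|y|$ would already suffice.
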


The proof proceeds by reducing 
\(\bigl|G_t^{(s)}(x-y)-G_t^{(s)}(x)\bigr|\) 
to an estimate of a Fourier integral over \(\mathbb T^d\), from which \eqref{eq:linfty-difference-kernel-sharp} follows.

\begin{proposition}
\label{prop:l1-sharp-by-splitting}
Let \(d\ge1\), \(s\in(0,1]\). Then there exists \(C=C(d,s)>0\) such that for all \(t>0\) and \(y\in\mathbb Z^d\),
\begin{equation}
\label{eq:l1-sharp-main}
\sum_{x\in\mathbb Z^d}\bigl|G_t^{(s)}(x-y)-G_t^{(s)}(x)\bigr|
\le
C\,\min\!\left\{1,\ |y|\,t^{-1/(2s)}\right\},
\end{equation}
where $G_t^{(s)}$ is defined in (\ref{gaussian}).
\end{proposition}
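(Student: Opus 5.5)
The bound by $C$ alone follows from $\|G_t^{(s)}\|_{\ell^1}\le C$. So the whole task is to show that for $|y|\le t^{1/(2s)}$ the $\ell^1$ norm of the increment is at most $C|y|t^{-1/(2s)}$.

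\textbf{Step 1: the trivial bound.} For $s=1$ the kernel $G_t^{(1)}$ is positive with total mass $1$. For $0<s<1$ I will use subordination: $e^{-t\omega^s}=\int_0^\infty e^{-r\omega}\,\eta_t^{(s)}(dr)$ with $\eta_t^{(s)}$ the one-sided $s$-stable probability law. This gives $G_t^{(s)}=\int_0^\infty G_r^{(1)}\,\eta_t^{(s)}(dr)\ge0$ with total mass $1$. Hence the sum in (\ref{eq:l1-sharp-main}) is at most $2$.

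\textbf{Step 2: reduction to unit steps.} Write $y$ as a lattice path of $|y_1|+\dots+|y_d|\le\sqrt d\,|y|$ unit steps $\pm e_j$. By translation invariance of the $\ell^1$ norm and the triangle inequality it suffices to prove
\[
\|G_t^{(s)}(\cdot-e_j)-G_t^{(s)}\|_{\ell^1}\le C\,t^{-1/(2s)} .
\]
We only need this for $t\ge1$; for $t\le1$ the trivial bound from Step 1 already suffices.

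\textbf{Step 3: the unit-step estimate for $s=1$.} I will not try to bound the Fourier integral of $(e^{-i\xi_j}-1)e^{-t\omega^s}$ directly in $\ell^1$; weighted $\ell^2$ via Plancherel is the safer route.

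Set $D_jG:=G(\cdot-e_j)-G$ and split $\mathbb Z^d$ at $|x|=R:=t^{1/2}$.
\begin{itemize}
\item On $\{|x|\le R\}$, Cauchy--Schwarz gives $\sum|D_jG|\le CR^{d/2}\|D_jG\|_{\ell^2}$. Plancherel gives $\|D_jG\|_{\ell^2}^2=(2\pi)^{-d}\int|e^{-i\xi_j}-1|^2e^{-2t\omega}\,d\xi\lesssim t^{-d/2-1}$, using $\omega\asymp|\xi|^2$.
\item On $\{|x|>R\}$, Cauchy--Schwarz with weight $|x|^{2k}$, $k>d/2$, gives $\sum_{|x|>R}|D_jG|\le CR^{d/2-k}\,\||x|^kD_jG\|_{\ell^2}$. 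By Plancherel, $\||x|^kD_jG\|_{\ell^2}$ equals the $L^2(\mathbb T^d)$ norm of $\partial_\xi^k\big[(e^{-i\xi_j}-1)e^{-t\omega}\big]$. The dominant contributions are those where the derivatives hit $e^{-t\omega}$, and these give $\lesssim t^{k/2}t^{-d/4-1/2}$.
\end{itemize}
Both pieces are therefore $\lesssim t^{-1/2}$, which is the required bound for $s=1$.

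\textbf{Step 4: the unit-step estimate for $0<s<1$.} Here the multiplier $e^{-t\omega^s}$ is not smooth at $\xi=0$, and high derivatives produce nonintegrable singularities like $|\xi|^{2s-k}$. This is the main obstacle, and I avoid it with subordination again:
\[
\|D_jG_t^{(s)}\|_{\ell^1}\le\int_0^\infty\|D_jG_r^{(1)}\|_{\ell^1}\,\eta_t^{(s)}(dr)\le C\int_0^\infty\min\{1,r^{-1/2}\}\,\eta_t^{(s)}(dr).
\]
By scaling, $\eta_t^{(s)}(dr)$ is the law of $t^{1/s}S$ with $S$ a standard $s$-stable subordinator variable. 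So the right side is at most $C\,t^{-1/(2s)}\,\mathbb E[S^{-1/2}]$. Negative moments of positive stable laws are finite, because $\mathbb E[S^{-q}]=\Gamma(q)^{-1}\int_0^\infty\lambda^{q-1}e^{-\lambda^s}\,d\lambda<\infty$. This gives (\ref{eq:l1-sharp-main}). The same subordination argument also re-derives Proposition \ref{prop:kernel-increment-pointwise} from the case $s=1$.

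Alternatively, one could run Step 3 directly with a Littlewood--Paley decomposition of $e^{-t\omega^s}$, but subordination is cleaner.
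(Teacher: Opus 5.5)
Your proof is correct, and it takes a different route from the paper for the core unit-step bound $\|G_t^{(s)}(\cdot-e_j)-G_t^{(s)}\|_{\ell^1}\le Ct^{-1/(2s)}$. Steps 1--2 coincide with the paper: the trivial bound $2$ comes from positivity and unit mass, and the reduction to unit steps is by telescoping along a lattice path.

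The paper treats all $s\in(0,1]$ at once, directly on the multiplier $(e^{i\xi_j}-1)e^{-t\omega^s}$:
\begin{itemize}
\item It cuts off at frequency $|\xi|\sim t^{-1/(2s)}$ and rescales the low-frequency piece.
\item Faà di Bruno bounds $|\partial^\nu(\omega^s)(\xi)|\lesssim|\xi|^{2s-|\nu|}$ show that derivatives of the rescaled symbol up to order $d+1<d+2s+1$ lie in $L^1$ uniformly in $t$. Integrating by parts then gives kernel decay $(1+t^{-1/(2s)}|x|)^{-(d+1)}$.
\item The region $|\xi|\ge\delta$ is exponentially small by Lemma~\ref{lem:Wm1-to-l1}.
\item The transition region $t^{-1/(2s)}\le|\xi|\le 2\delta$ is handled by an asserted scaling claim without detailed proof.
\end{itemize}

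You instead prove only the case $s=1$, by a Carlson--Beurling type weighted-$\ell^2$ argument: Cauchy--Schwarz on $\{|x|\le t^{1/2}\}$ and $\{|x|>t^{1/2}\}$, with Plancherel for $|x|^k D_jG$. There $e^{-t\omega}$ is smooth on the torus and the derivative counting is clean. You then pass to $0<s<1$ by Bochner subordination, Minkowski's inequality, and the scaling $\eta_t^{(s)}=\mathrm{law}(t^{1/s}S)$. The only extra input is the finite negative moment $\mathbb{E}[S^{-1/2}]$.

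What each route buys:
\begin{itemize}
\item Yours avoids the non-smoothness of $\omega^s$ at the origin entirely. That is exactly where the paper must work with singular, merely integrable derivatives and justify integration by parts.
\item Yours reuses the subordination identity that the paper itself invokes in Section~4 for positivity. It also re-derives the pointwise bound of Proposition~\ref{prop:kernel-increment-pointwise}, as you note, since all negative moments of $S$ are finite.
\item The paper's route is purely Fourier-analytic. It would adapt to symbols not of the form $f(\omega)$ with $f$ a Bernstein function, where subordination is unavailable.
\end{itemize}

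One minor imprecision in your Step 3: the Leibniz terms where exactly one derivative falls on $e^{-i\xi_j}-1$ are of the same order $t^{k/2-d/4-1/2}$ as those where all derivatives fall on $e^{-t\omega}$, not lower order. Both obey the bound you state, so the conclusion is unaffected.
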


Proposition~\ref{prop:l1-sharp-by-splitting} is one of the technically most delicate parts of the paper.
Obtaining a \emph{sharp} \(\ell^1\)-estimate is difficult because one must control oscillatory Fourier
integrals on the torus while keeping the exact scaling
\(
\min\{1,\ |y|\,t^{-1/(2s)}\}.
\)
A rough bound loses either the \(y\)-dependence or the optimal time decay.

To preserve sharpness, we decompose the increment multiplier $(e^{i y\cdot \xi}-1)e^{-t\omega(\xi)^s}$
into a low-frequency and a high-frequency part.  
The low-frequency contribution captures the main scaling structure:
after rescaling, it behaves like a stable profile and yields the critical factor
\(
|y|\,t^{-1/(2s)}.
\)
The high-frequency contribution is controlled by the strong damping of
\(e^{-t\omega(\xi)^s}\) away from \(\xi=0\), together with Sobolev--Fourier estimates on
\(\mathbb T^d\) (see Lemma \ref{lem:Wm1-to-l1}).
A careful treatment of the transition region between these two regimes is essential to avoid derivative
loss and to close the estimate at the optimal order.
Combining both parts gives \eqref{eq:l1-sharp-main}.

With the above two propositions at hand, we derive the first of our main theorems.

\begin{theorem}
\label{thm:lattice-first-order}
Assume \(u_0\in \ell^1(\mathbb Z^d)\) and \(\mathcal N_1(u_0)<\infty\). Let $u(t) = u(t,\cdot)$ be the solution of (\ref{eq:cauchy-frac-lap}). Then there exists \(C=C(d,s)>0\) such that for all \(t>0\),
\begin{equation}
\label{eq:lattice-Linfty-rate}
t^{\frac d{2s}}\,
\|u(t)-M G_t^{(s)}\|_{\ell^\infty(\mathbb Z^d)}
\le
C\,\mathcal N_1(u_0)\,t^{-\frac1{2s}},
\end{equation}
and
\begin{equation}
\label{eq:lattice-L1-rate}
\|u(t)-M G_t^{(s)}\|_{\ell^1(\mathbb Z^d)}
\le
C\,\mathcal N_1(u_0)\,t^{-\frac1{2s}}.
\end{equation}
Consequently, for every \(1\le p\le\infty\),
\begin{equation}
\label{eq:lattice-Lp-rate}
t^{\frac d{2s}\left(1-\frac1p\right)}
\|u(t)-M G_t^{(s)}\|_{\ell^p(\mathbb Z^d)}
\le
C\,\mathcal N_1(u_0)\,t^{-\frac1{2s}}.
\end{equation}
\end{theorem}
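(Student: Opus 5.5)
The plan is to write the error as an average of kernel increments and then apply Proposition~\ref{prop:kernel-increment-pointwise} and Proposition~\ref{prop:l1-sharp-by-splitting}. Here $M:=\sum_{y\in\mathbb Z^d}u_0(y)$ and $\mathcal N_1(u_0)=\sum_{y}|y|\,|u_0(y)|$.

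Since $u(t)=G_t^{(s)}*u_0$, we have for every $x\in\mathbb Z^d$
\[
u(t,x)-M G_t^{(s)}(x)=\sum_{y\in\mathbb Z^d}u_0(y)\bigl(G_t^{(s)}(x-y)-G_t^{(s)}(x)\bigr).
\]
The sum converges absolutely because $u_0\in\ell^1$ and $G_t^{(s)}$ is bounded; indeed $|G_t^{(s)}(x)|\le(2\pi)^{-d}\int_{\mathbb T^d}e^{-t\omega^s}\,d\xi$.

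\textbf{The $\ell^\infty$ bound.} Take absolute values and use \eqref{eq:linfty-difference-kernel-sharp} with $\min\{1,a\}\le a$:
\[
|u(t,x)-MG_t^{(s)}(x)|\le C t^{-\frac d{2s}}\sum_y |u_0(y)|\,|y|\,t^{-\frac1{2s}}=C\,\mathcal N_1(u_0)\,t^{-\frac d{2s}-\frac1{2s}}.
\]
This holds uniformly in $x$, which gives \eqref{eq:lattice-Linfty-rate}.

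\textbf{The $\ell^1$ bound.} Sum over $x$ and use Tonelli (all terms are nonnegative) to exchange the sums. Then \eqref{eq:l1-sharp-main} gives
\[
\|u(t)-MG_t^{(s)}\|_{\ell^1}\le\sum_y|u_0(y)|\sum_x|G_t^{(s)}(x-y)-G_t^{(s)}(x)|\le C t^{-\frac1{2s}}\mathcal N_1(u_0).
\]
This is \eqref{eq:lattice-L1-rate}.

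\textbf{The $\ell^p$ bound.} For $1<p<\infty$, write $f=u(t)-MG_t^{(s)}$ and use the elementary interpolation $\|f\|_{\ell^p}\le\|f\|_{\ell^\infty}^{1-1/p}\|f\|_{\ell^1}^{1/p}$. Inserting the two bounds gives
\[
\|f\|_{\ell^p}\le C\,\mathcal N_1(u_0)\,t^{-\frac d{2s}(1-\frac1p)}\,t^{-\frac1{2s}},
\]
which is \eqref{eq:lattice-Lp-rate}. The constant can be taken as $\max$ of the two constants and is independent of $p$.

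\textbf{Where the difficulty lies.} There is no real obstacle at this stage: all the analytic difficulty sits in the two kernel propositions, especially the sharp $\ell^1$ increment bound. The only points to check are the Fubini/Tonelli exchange, which is justified by nonnegativity, and the absolute convergence of the defining series, noted above.
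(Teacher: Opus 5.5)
Your proposal is correct and follows the paper's proof essentially step for step. Like the paper, you write $u(t)-MG_t^{(s)}$ as a $u_0$-weighted sum of kernel increments, apply Propositions~\ref{prop:kernel-increment-pointwise} and~\ref{prop:l1-sharp-by-splitting} with $\min\{1,a\}\le a$ (exchanging the sums by Tonelli), and obtain the $\ell^p$ bound from the $\ell^1$--$\ell^\infty$ interpolation inequality.
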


Unlike the Euclidean fractional heat kernel, the lattice kernel is not exactly self-similar; nevertheless, it satisfies a limit scaling toward the stable profile in self-similar variables. Define
\[
\Phi_s(\eta):=
\frac1{(2\pi)^d}\int_{\mathbb R^d}
e^{-|\zeta|^{2s}}e^{i\eta\cdot\zeta}\,d\zeta,
\qquad \eta\in\mathbb R^d.
\]

\begin{proposition}
\label{lem:lattice-stable-scaling}
Let \(d\ge1\), \(s\in(0,1]\). 
For each \(t>0\), let \(x_t:\mathbb R^d\to\mathbb Z^d\) be any selection satisfying
\begin{equation}
\label{eq:nearest-lattice-selection}
\sup_{\eta\in\mathbb R^d}\big|x_t(\eta)-t^{1/(2s)}\eta\big|\le C_0
\end{equation}
for some constant \(C_0>0\) independent of \(t\).
Then, for every \(R>0\),
\begin{equation}
\label{eq:lclt-uniform-compact}
\sup_{|\eta|\le R}
\left|
t^{d/(2s)}\,G_t^{(s)}\!\big(x_t(\eta)\big)-\Phi_s(\eta)
\right|
\rightarrow 0,
\quad t\to\infty.
\end{equation}
In particular, if \(x=t^{1/(2s)}\eta+O(1)\) with \(|\eta|\le R\), then
\[
G_t^{(s)}(x)=t^{-d/(2s)}\Phi_s(\eta)+o\,\big(t^{-d/(2s)}\big),
\]
locally uniformly in \(\eta\).
\end{proposition}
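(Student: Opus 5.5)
The plan is to pass to self-similar variables in the Fourier integral \eqref{gaussian} and compare with \(\Phi_s\) by dominated convergence. First I would reduce to the exact scaled point. Write \(x=x_t(\eta)\) and set \(\tilde\eta:=t^{1/(2s)}\eta\), so that \(|x-\tilde\eta|\le C_0\). The definition \eqref{gaussian} makes sense for real arguments, so extend \(G_t^{(s)}\) to \(\mathbb R^d\) by the same formula. Then
\[
\bigl|G_t^{(s)}(x)-G_t^{(s)}(\tilde\eta)\bigr|\le \frac{C_0}{(2\pi)^d}\int_{\mathbb T^d}|\xi|\,e^{-t\omega(\xi)^s}\,d\xi .
\]
Since \(\omega(\xi)\ge c|\xi|^2\) on \(\mathbb T^d\) (because \(\sin^2(\xi_j/2)\ge \xi_j^2/\pi^2\) for \(|\xi_j|\le\pi\)), this integral is bounded by \(C t^{-(d+1)/(2s)}\). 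After multiplying by \(t^{d/(2s)}\), it is \(O(t^{-1/(2s)})\to0\), uniformly in \(\eta\). This is the same mechanism as Proposition~\ref{prop:kernel-increment-pointwise}, but applied to a non-integer shift; the Fourier argument behind that proposition handles real shifts equally well.

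Next comes the main term. Substitute \(\xi=t^{-1/(2s)}\zeta\):
\[
t^{d/(2s)}G_t^{(s)}(\tilde\eta)=\frac1{(2\pi)^d}\int_{\mathbb R^d}\mathbf 1_{\{\zeta\in t^{1/(2s)}\mathbb T^d\}}\,e^{-t\,\omega(t^{-1/(2s)}\zeta)^s}\,e^{i\eta\cdot\zeta}\,d\zeta .
\]
Subtracting \(\Phi_s(\eta)\) and bounding the phase by \(1\) gives
\[
\sup_{\eta}\Bigl|t^{d/(2s)}G_t^{(s)}(\tilde\eta)-\Phi_s(\eta)\Bigr|\le \frac1{(2\pi)^d}\int_{\mathbb R^d}\Bigl|\mathbf 1_{t^{1/(2s)}\mathbb T^d}(\zeta)\,e^{-t\omega(t^{-1/(2s)}\zeta)^s}-e^{-|\zeta|^{2s}}\Bigr|\,d\zeta .
\]
This bound is uniform in all \(\eta\in\mathbb R^d\), which is stronger than needed for \(|\eta|\le R\). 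Pointwise convergence of the integrand holds because \(\omega(\xi)=|\xi|^2(1+O(|\xi|^2))\), so \(t\,\omega(t^{-1/(2s)}\zeta)^s=|\zeta|^{2s}(1+O(t^{-1/s}|\zeta|^2))^s\to|\zeta|^{2s}\). For domination, the lower bound \(\omega(\xi)\ge c|\xi|^2\) on \(\mathbb T^d\) gives \(t\,\omega(t^{-1/(2s)}\zeta)^s\ge c^s|\zeta|^{2s}\) on the truncated region. Hence the integrand is at most \(e^{-c^s|\zeta|^{2s}}+e^{-|\zeta|^{2s}}\in L^1(\mathbb R^d)\), and dominated convergence yields convergence to \(0\).

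Combining the two steps gives \eqref{eq:lclt-uniform-compact}, in fact uniformly over all \(\eta\in\mathbb R^d\). The "in particular" statement is just a restatement with \(x_t(\eta)=x\). The only step needing care is the domination, namely the uniform lower bound \(\omega(\xi)\ge \tfrac{4}{\pi^2}|\xi|^2\) on \([-\pi,\pi]^d\), together with extending the kernel to real arguments for the \(O(1)\) shift. Both are elementary, so I do not expect a real obstacle. If the real-argument extension were unwanted, one could instead compare \(x\) with a nearest lattice point and apply Proposition~\ref{prop:kernel-increment-pointwise} directly. That also costs only \(O(t^{-(d+1)/(2s)})\), while the leftover sub-unit shift is absorbed into the continuity of \(\Phi_s\) (which is uniformly continuous, being the Fourier transform of an \(L^1\) function with \(|\zeta|e^{-|\zeta|^{2s}}\in L^1\)).
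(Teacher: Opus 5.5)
Your proof is correct and follows essentially the same route as the paper. Both rescale $\xi=t^{-1/(2s)}\zeta$, use the pointwise limit $t\,\omega(t^{-1/(2s)}\zeta)^s\to|\zeta|^{2s}$, and apply dominated convergence with domination from $\omega(\xi)\ge\frac{4}{\pi^2}|\xi|^2$ on $\mathbb T^d$. The only difference is organizational. You first peel off the $O(1)$ lattice offset with a Lipschitz bound for the real-argument extension of the kernel, which leaves an $\eta$-independent integrand and gives uniformity over all $\eta\in\mathbb R^d$. The paper instead keeps the offset inside the phase $e^{i\lambda x_t(\eta)\cdot\zeta}$, uses that $\lambda x_t(\eta)\to\eta$ uniformly, and splits the integral into a ball $|\zeta|\le A$ and a tail.
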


Proposition~\ref{lem:lattice-stable-scaling} provides a lattice analogue of the self-similar scaling
for the Euclidean \(2s\)-stable kernel.
This asymptotic identification is the key tool for transferring sharp lower bounds from the
continuum to the lattice. In particular, it allows us to quantify the effect of a microscopic
translation \(x\mapsto x-e_1\) at large times: on the self-similar scale, such a shift corresponds
to a perturbation of size \(t^{-1/(2s)}\), so the leading-order difference is governed by the
first derivative \(\partial_1\Phi_s\). The next proposition exploits this observation to show
that the decay rate \(t^{-1/(2s)}\) in our first-order asymptotics cannot be improved.

\begin{proposition}
\label{prop:optimality-first-order}
Let \(d\ge1\), \(s\in(0,1]\), and \(1\le p\le\infty\).
There exist \(c=c(d,s,p)>0\) and \(t_0\ge1\) such that for all \(t\ge t_0\),
\[
t^{\frac d{2s}\left(1-\frac1p\right)}
\big\|G_t^{(s)}(\cdot-e_1)-G_t^{(s)}\big\|_{\ell^p(\mathbb Z^d)}
\ge c\,t^{-\frac1{2s}}.
\]
Hence the rate \(t^{-1/(2s)}\) in Theorem~\ref{thm:lattice-first-order} is sharp.
\end{proposition}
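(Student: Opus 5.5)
The plan is to get a lower bound for the $\ell^p$ norm from a pointwise lower bound on a set of lattice points whose size is comparable to $t^{d/(2s)}$, on the self-similar scale. For $p<\infty$ this set carries enough mass; for $p=\infty$ a single point suffices. The key input is Proposition~\ref{lem:lattice-stable-scaling}, together with a first-derivative version of it. The convolution form of the shift is not needed here: the statement is only about the kernel.

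\emph{Step 1: a scaling limit for the difference quotient.} Write
\[
G_t^{(s)}(x-e_1)-G_t^{(s)}(x)=\frac{1}{(2\pi)^d}\int_{\mathbb T^d}(e^{-i\xi_1}-1)e^{-t\omega(\xi)^s}e^{i\langle x,\xi\rangle}\,d\xi .
\]
Take $x=x_t(\eta)$ as in \eqref{eq:nearest-lattice-selection} and substitute $\xi=t^{-1/(2s)}\zeta$. The claim to prove is
\[
\sup_{|\eta|\le R}\Big|t^{\frac{d+1}{2s}}\big(G_t^{(s)}(x_t(\eta)-e_1)-G_t^{(s)}(x_t(\eta))\big)+\partial_1\Phi_s(\eta)\Big|\to0 .
\]
This follows the proof of Proposition~\ref{lem:lattice-stable-scaling} with one extra factor of $\zeta_1$:
\begin{itemize}
\item $t^{1/(2s)}(e^{-it^{-1/(2s)}\zeta_1}-1)\to -i\zeta_1$, and this factor is bounded by $|\zeta_1|$.
\item $t\,\omega(t^{-1/(2s)}\zeta)^s\to|\zeta|^{2s}$.
\item On the rescaled torus $|\zeta|\le \pi t^{1/(2s)}$ we have $\omega(\xi)\ge c|\xi|^2$, so the integrand is dominated by $|\zeta|e^{-c|\zeta|^{2s}}$, which is integrable.
\item The phase error $e^{i\langle x_t(\eta)-t^{1/(2s)}\eta,\xi\rangle}-1$ is $O(C_0 t^{-1/(2s)}|\zeta|)$.
\end{itemize}
Dominated convergence, uniform in $|\eta|\le R$, then gives the limit $\frac{1}{(2\pi)^d}\int(-i\zeta_1)e^{-|\zeta|^{2s}}e^{i\eta\cdot\zeta}d\zeta=-\partial_1\Phi_s(\eta)$.

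\emph{Step 2: $\partial_1\Phi_s$ is not identically zero.} $\Phi_s$ is a nonzero Schwartz-type (for $s=1$) or at least $C^1$ and integrable function, so it cannot be constant in $\eta_1$. Hence there are $\eta_0$ and $r>0$ with $|\partial_1\Phi_s|\ge 2a>0$ on the ball $B(\eta_0,r)$. Step~1 with $R=|\eta_0|+r$ then gives, for $t\ge t_0$, the pointwise bound
\[
|G_t^{(s)}(x-e_1)-G_t^{(s)}(x)|\ge a\,t^{-(d+1)/(2s)}
\]
for every lattice point $x$ with $t^{-1/(2s)}x\in B(\eta_0,r/2)$. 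To see this, set $\eta=t^{-1/(2s)}x$ and use the identity selection $x_t(\eta)=x$, which satisfies \eqref{eq:nearest-lattice-selection} with $C_0=0$ on those points; extend it by nearest lattice points elsewhere.

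\emph{Step 3: counting points.} The number of such lattice points is at least $c\,t^{d/(2s)}$ once $t$ is large. Therefore
\[
\|\cdot\|_{\ell^p}\ge a\,t^{-\frac{d+1}{2s}}\big(c\,t^{\frac d{2s}}\big)^{1/p},
\]
and multiplying by $t^{\frac d{2s}(1-\frac1p)}$ gives exactly $c'\,t^{-1/(2s)}$. The case $p=\infty$ reads the same with $1/p=0$.

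\emph{Step 4: sharpness.} Take $u_0=\delta_{e_1}-\delta_0$. Then $M=0$, $\mathcal N_1(u_0)=1$, and $u(t)-MG_t^{(s)}=G_t^{(s)}(\cdot-e_1)-G_t^{(s)}$, so the rate in Theorem~\ref{thm:lattice-first-order} cannot be improved.

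\emph{Main obstacle.} The step I expect to need the most care is the uniform dominated-convergence argument in Step~1. The domination $\omega(\xi)\ge \frac{4}{\pi^2}|\xi|^2$ on $\mathbb T^d$ takes care of the high frequencies, but the argument must be checked to be uniform in $\eta$, including the $O(1)$ discrepancy between $x$ and $t^{1/(2s)}\eta$.
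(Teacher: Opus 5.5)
Your proof is correct. It differs from the paper's in the key asymptotic step, and in that step it is more careful than the paper.

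\textbf{Step 1.} The paper gets \eqref{eq:Ht-local-asymptotic} indirectly. It applies Proposition~\ref{lem:lattice-stable-scaling} separately at $x_t(\eta)$ and at $x_t(\eta)-e_1$, subtracts, and Taylor-expands $\Phi_s(\eta-t^{-1/(2s)}e_1)-\Phi_s(\eta)$. As written this does not quite close. Proposition~\ref{lem:lattice-stable-scaling} only gives an $o(1)$ error for $t^{d/(2s)}G_t^{(s)}$, with no rate. Multiplying by $t^{1/(2s)}$ to isolate $\partial_1\Phi_s$ would require that error to be $o(t^{-1/(2s)})$, and this is never established.

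You instead prove the difference-quotient limit directly on the Fourier side. You carry the extra multiplier $t^{1/(2s)}(e^{-it^{-1/(2s)}\zeta_1}-1)\to -i\zeta_1$, dominated by $|\zeta_1|$, through the same dominated-convergence scheme. This gives the $o(1)$ error at the correct scale $t^{-(d+1)/(2s)}$, and it is exactly what makes the lower bound rigorous. Using the exact choice $\eta=t^{-1/(2s)}x$ also removes any phase error from the selection.

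\textbf{Steps 3 and 4.} For the $\ell^p$ bound, the paper uses Riemann-sum convergence of $\sum|\partial_1\Phi_s(t^{-1/(2s)}x)|^p$ over all of $B_R$. Your version is simpler: you restrict to a small ball where $|\partial_1\Phi_s|\ge 2a$ and count $\gtrsim t^{d/(2s)}$ lattice points. Your explicit datum $u_0=\delta_{e_1}-\delta_0$, with $M=0$ and $\mathcal N_1(u_0)=1$, makes the ``hence sharp'' clause precise; the paper leaves this implicit.

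\textbf{Minor points.}
\begin{itemize}
\item The rescaled torus is the cube $[-\pi t^{1/(2s)},\pi t^{1/(2s)}]^d$, not the ball $|\zeta|\le \pi t^{1/(2s)}$. The domination argument is unaffected.
\item For $\partial_1\Phi_s\not\equiv0$, you do not need integrability of the stable density. Riemann--Lebesgue gives $\Phi_s\in C_0(\mathbb R^d)$, and $\Phi_s(0)>0$. So $\eta_1\mapsto\Phi_s(\eta_1e_1)$ cannot be constant, which makes Step~2 self-contained.
\end{itemize}
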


We now turn to the large-time asymptotics for general integrable initial data, without any
moment assumption. In contrast to the finite first-moment regime, one cannot expect a
quantitative convergence rate in general (see Proposition~\ref{prop:no-universal-rate-lattice}
below). Nevertheless, the mass conservation and the regularizing effect of the semigroup
still force the solution to converge, after the natural parabolic normalization, to the
fundamental solution profile \(M G_t^{(s)}\). The following theorem records this qualitative
\(\ell^1\) and \(\ell^\infty\) convergence, and by interpolation yields convergence in all
\(\ell^p\)-norms.

\begin{theorem}
\label{thm:lattice-asymptotic-no-moment}
Assume \(u_0\in \ell^1(\mathbb Z^d)\).  Let $u(t) = u(t,\cdot)$ be the solution of (\ref{eq:cauchy-frac-lap}). Then
\begin{equation}
\label{eq:lattice-L1-conv-no-moment}
\lim_{t\to\infty}\|u(t)-M G_t^{(s)}\|_{\ell^1(\mathbb Z^d)}=0,
\end{equation}
and
\begin{equation}
\label{eq:lattice-Linfty-conv-no-moment}
\lim_{t\to\infty}t^{\frac d{2s}}
\|u(t)-M G_t^{(s)}\|_{\ell^\infty(\mathbb Z^d)}=0.
\end{equation}
Consequently, for every \(1\le p\le\infty\),
\begin{equation}
\label{eq:lattice-Lp-conv-no-moment}
\lim_{t\to\infty}
t^{\frac d{2s}\left(1-\frac1p\right)}
\|u(t)-M G_t^{(s)}\|_{\ell^p(\mathbb Z^d)}=0.
\end{equation}
\end{theorem}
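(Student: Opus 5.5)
The plan is a density (truncation) argument built on Propositions~\ref{prop:kernel-increment-pointwise} and \ref{prop:l1-sharp-by-splitting}. Since $\sum_y G_t^{(s)}(x-y)=\sum_x G_t^{(s)}(x)=1$ (the multiplier equals $1$ at $\xi=0$) and $M=\sum_y u_0(y)$, we can write
\[
u(t,x)-MG_t^{(s)}(x)=\sum_{y\in\mathbb Z^d}u_0(y)\bigl(G_t^{(s)}(x-y)-G_t^{(s)}(x)\bigr).
\]
Fix $R\ge1$ and split the sum into the parts $|y|\le R$ and $|y|>R$.

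For the $\ell^1$ statement \eqref{eq:lattice-L1-conv-no-moment}, we take the $\ell^1_x$ norm and apply Minkowski. Proposition~\ref{prop:l1-sharp-by-splitting} then gives
\[
\|u(t)-MG_t^{(s)}\|_{\ell^1}\le C\sum_{|y|\le R}|u_0(y)|\,|y|\,t^{-1/(2s)}+C\sum_{|y|>R}|u_0(y)|
\le CR\|u_0\|_{\ell^1}t^{-1/(2s)}+C\sum_{|y|>R}|u_0(y)|.
\]
We let $t\to\infty$ first and then $R\to\infty$. Because $u_0\in\ell^1$, the tail sum tends to $0$, and this yields the limit.

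For \eqref{eq:lattice-Linfty-conv-no-moment}, we argue the same way with Proposition~\ref{prop:kernel-increment-pointwise}. After multiplying by $t^{d/(2s)}$, the pointwise bound gives
\[
t^{d/(2s)}\,|u(t,x)-MG_t^{(s)}(x)|\le CR\|u_0\|_{\ell^1}t^{-1/(2s)}+C\sum_{|y|>R}|u_0(y)|,
\]
uniformly in $x$. The same double limit finishes the argument.

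The $\ell^p$ statement \eqref{eq:lattice-Lp-conv-no-moment} follows from the interpolation inequality $\|f\|_{\ell^p}\le\|f\|_{\ell^1}^{1/p}\|f\|_{\ell^\infty}^{1-1/p}$. Applied to $f=u(t)-MG_t^{(s)}$, it gives
\[
t^{\frac d{2s}(1-\frac1p)}\|f\|_{\ell^p}\le\|f\|_{\ell^1}^{1/p}\bigl(t^{d/(2s)}\|f\|_{\ell^\infty}\bigr)^{1-1/p}\to0 .
\]

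No step is a genuine obstacle, since the hard kernel estimates are already contained in the two propositions. The only points needing care are the two identities used to write the difference as a single sum: the normalization $\sum_x G_t^{(s)}(x)=1$, and the interchange of the $y$-sum with the $\ell^1_x$ norm. The interchange is justified because $u_0\in\ell^1$ and $G_t^{(s)}\in\ell^1$; the latter follows from Proposition~\ref{prop:l1-sharp-by-splitting} together with the positivity of the kernel, or more directly from smoothness of the symbol on $\mathbb T^d$ for $s=1$ and the heavy-tail estimate for $s<1$.
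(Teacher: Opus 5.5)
Your proof is correct. For the $\ell^1$ limit it is the same density argument as the paper's, with different bookkeeping. The paper approximates $u_0$ by a finitely supported $v_0$, then uses the $\ell^1$-contraction of $S_s(t)$, the bound $|M-M_v|\le\|u_0-v_0\|_{\ell^1}$ and $\|G_t^{(s)}\|_{\ell^1}=1$, and applies Theorem~\ref{thm:lattice-first-order} to $v_0$. You instead truncate the $y$-sum in place and control the tail by the ``$1$'' branch of $\min\{1,|y|t^{-1/(2s)}\}$ in Proposition~\ref{prop:l1-sharp-by-splitting}; that branch is where the contraction and normalization are hidden.

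The real difference is the $\ell^\infty$ limit. The paper does not rerun the truncation there. It writes $R(t)=G_{t/2}^{(s)}*R(t/2)$ and applies Young's inequality with the on-diagonal bound $\|G_{t/2}^{(s)}\|_{\ell^\infty}\le Ct^{-d/(2s)}$, which gives $t^{d/(2s)}\|R(t)\|_{\ell^\infty}\le C\|R(t/2)\|_{\ell^1}\to0$.
\begin{itemize}
\item The paper's route is structural: any $\ell^1$ convergence automatically upgrades to the scaled $\ell^\infty$ statement. It needs only the ultracontractive bound on $G_t^{(s)}$, not the pointwise increment estimate of Proposition~\ref{prop:kernel-increment-pointwise}.
\item Your route handles both norms the same way. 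Each is dominated by the single quantity $\sum_y|u_0(y)|\min\{1,|y|t^{-1/(2s)}\}$, which tends to $0$ by dominated convergence.
\item Your formulation also gives rates under intermediate moments with no extra work. If $\sum_y|y|^\alpha|u_0(y)|<\infty$ for some $0<\alpha\le1$, then $\min\{1,a\}\le a^\alpha$ yields the rate $t^{-\alpha/(2s)}$ in both norms. This interpolates between the present theorem and Theorem~\ref{thm:lattice-first-order}.
\end{itemize}

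A minor point: the identity $u(t,x)-MG_t^{(s)}(x)=\sum_y u_0(y)\bigl(G_t^{(s)}(x-y)-G_t^{(s)}(x)\bigr)$ needs only the definition of $M$ and the boundedness of $G_t^{(s)}$. Once the absolute values are inside, exchanging the sums is just Tonelli, so you need no separate summability argument there.
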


The first-order asymptotics obtained under a finite first-moment assumption are quantitative and
sharp, but this is no longer the case for general integrable data. In fact, even after the natural
self-similar normalization \(t^{d/(2s)}\), one cannot prescribe a rate of convergence that holds
uniformly over all unit-mass nonnegative initial data in \(\ell^1(\mathbb Z^d)\).
The next proposition provides a counterexample mechanism: given any prescribed decay function
\(\phi(t)\downarrow0\), one can construct an \(\ell^1\) datum whose convergence along a sequence of
times is at least as slow as \(k\,\phi(t_k)\). As an immediate consequence, there is no uniform
estimate of the form \eqref{eq:no-uniform-rate-unit-mass} valid for all such data.

\begin{proposition}
\label{prop:no-universal-rate-lattice}
Let \(d\ge1\), \(s\in(0,1]\), and let \(\phi:[1,\infty)\to(0,\infty)\) be any decreasing function with
\[
\phi(t)\xrightarrow[]{t\to\infty}0.
\]
Then there exist \(u_0\in \ell^1(\mathbb Z^d)\), \(u_0\ge0\), \(\sum_{x\in\mathbb Z^d}u_0(x)=1\), and a sequence
\(t_k\to\infty\) such that, for the solution
\[
u(t,\cdot)=G_t^{(s)}*u_0,
\]
one has
\begin{equation}
\label{eq:no-universal-rate-lattice}
t_k^{\frac d{2s}}\,
\|u(t_k)-G_{t_k}^{(s)}\|_{\ell^\infty(\mathbb Z^d)}
\ge
k\,\phi(t_k),\qquad k\in\mathbb N.
\end{equation}
\end{proposition}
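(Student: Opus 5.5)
The idea is to take \(u_0=\sum_k a_k\delta_{y_k}\), a probability measure built from point masses \(a_k>0\) with \(\sum a_k=1\) placed at rapidly escaping sites \(y_k=n_k e_1\). Both the sequence \(t_k\) and the data are chosen inductively. The mechanism is the translation effect already seen in Proposition~\ref{prop:optimality-first-order}: a mass sitting at distance \(\gg t^{1/(2s)}\) contributes essentially nothing near the origin. Hence, evaluating at \(x=0\) (where \(G_t^{(s)}\) is largest), the missing mass produces a deficit of size comparable to that mass times \(t^{-d/(2s)}\Phi_s(0)\).

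Concretely, write
\[
t^{\frac d{2s}}\bigl(u(t,0)-G_t^{(s)}(0)\bigr)=\sum_j a_j\, t^{\frac d{2s}}\bigl(G_t^{(s)}(y_j)-G_t^{(s)}(0)\bigr).
\]
By Proposition~\ref{lem:lattice-stable-scaling}, \(t^{d/(2s)}G_t^{(s)}(0)\to\Phi_s(0)>0\). Also \(|t^{d/(2s)}G_t^{(s)}(y)|\le C\) uniformly by Proposition~\ref{prop:kernel-increment-pointwise} (take \(y=x\), \(x=0\) there, together with the uniform bound on \(G_t^{(s)}(0)\)). For fixed \(t\), the kernel satisfies \(G_t^{(s)}(y)\to0\) as \(|y|\to\infty\) by Riemann--Lebesgue. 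For the terms with \(j\le k\), I will use that \(y_j\) is fixed, so \(t^{d/(2s)}(G_t^{(s)}(y_j)-G_t^{(s)}(0))\to0\) as \(t\to\infty\) by Proposition~\ref{prop:kernel-increment-pointwise} (the bound is \(C|y_j|t^{-1/(2s)}\)). For \(j>k\), the terms are close to \(-a_j\Phi_s(0)\) provided \(y_j\) is chosen far away after \(t_k\) is fixed. Thus
\[
t_k^{\frac d{2s}}\bigl|u(t_k,0)-G_{t_k}^{(s)}(0)\bigr|\ \ge\ \tfrac12\Phi_s(0)\,r_k,\qquad r_k:=\sum_{j>k}a_j,
\]
as long as the errors are made smaller than \(\tfrac14\Phi_s(0)r_k\).

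It remains to make \(\tfrac12\Phi_s(0)r_k\ge k\phi(t_k)\). Since \(\phi\to0\), the tails should be fixed first and the times chosen afterwards. Take \(r_k=2^{-k}\), that is, \(a_k=2^{-k}\). Then choose \(t_k\) increasing so large that \(k\phi(t_k)\le \tfrac12\Phi_s(0)2^{-k}\); this is possible because \(\phi(t)\to0\). Simultaneously take \(t_k\) large enough that the finitely many head terms \(j\le k\) satisfy \(\sum_{j\le k}a_j\, t_k^{d/(2s)}|G_{t_k}^{(s)}(y_j)-G_{t_k}^{(s)}(0)|\le \tfrac18\Phi_s(0)2^{-k}\), and that \(|t_k^{d/(2s)}G_{t_k}^{(s)}(0)-\Phi_s(0)|\le\tfrac1{16}\Phi_s(0)\). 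The sites \(y_{k+1},y_{k+2},\dots\) have not been placed yet at this stage.

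The main obstacle is the order of the inductive choices. The tail terms \(j>k\) require \(t_k^{d/(2s)}G_{t_k}^{(s)}(y_j)\) to be small for all \(j>k\), and \(y_j\) is chosen after \(t_k\). At step \(j\), therefore, \(y_j\) must be selected far enough that \(t_i^{d/(2s)}|G_{t_i}^{(s)}(y_j)|\le\tfrac1{16}\Phi_s(0)\) for all \(i<j\). This is finitely many conditions, each satisfiable by Riemann--Lebesgue decay for fixed \(t_i\). So the induction alternates: choose \(y_k\) (far with respect to \(t_1,\dots,t_{k-1}\)), then choose \(t_k\) (large with respect to \(y_1,\dots,y_k\) and \(\phi\)). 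With these choices the tail sum is at least \(r_k(\Phi_s(0)-\tfrac1{16}\Phi_s(0)-\tfrac1{16}\Phi_s(0))\) in absolute value. Combining this with the head estimate gives the bound at \(x=0\), and hence \eqref{eq:no-universal-rate-lattice}.
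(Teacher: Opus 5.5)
Your proof is correct, but it takes a different route from the paper's.

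\textbf{The paper's route.} The paper takes $u_0=(1-\delta)\delta_0+\sum_k m_k\delta_{-x_k}$ and places the $k$-th lump exactly at the self-similar distance of time $t_k$, namely $x_k=\lfloor t_k^{1/(2s)}\rho_*e_1\rfloor$, with $\rho_*$ chosen so that $\Phi_s(\rho_*e_1)\le\frac14\Phi_s(0)$. The uniform local limit of Proposition~\ref{lem:lattice-stable-scaling}, applied at both $0$ and $\rho_*e_1$, then gives a fixed gap $t^{d/(2s)}\bigl(G_t^{(s)}(0)-G_t^{(s)}(x_k)\bigr)\ge c_*$ at time $t_k$. The paper then keeps only the $k$-th term of the sum at the origin. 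This silently uses that every term $G_t^{(s)}(0)-G_t^{(s)}(x_j)$ is nonnegative, which holds because $|G_t^{(s)}(x)|\le G_t^{(s)}(0)$ by the positive Fourier multiplier. As a result no interaction between lumps has to be controlled, and $x_k$ is simply a function of $t_k$.

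\textbf{Your route.} You instead push the later lumps effectively to infinity relative to the earlier times, and use the whole tail mass $r_k=\sum_{j>k}a_j$ as the source of the deficit. You need only three ingredients:
\begin{itemize}
\item the single-point limit $t^{d/(2s)}G_t^{(s)}(0)\to\Phi_s(0)$;
\item spatial decay of $G_t^{(s)}$ at fixed $t$;
\item the Lipschitz bound of Proposition~\ref{prop:kernel-increment-pointwise}, to make the head terms small.
\end{itemize}
The cost is the alternating induction: $y_k$ is chosen after $t_1,\dots,t_{k-1}$, then $t_k$ after $y_1,\dots,y_k$. You handle this ordering correctly, and the constants close to give $\frac34\Phi_s(0)2^{-k}\ge k\phi(t_k)$.

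\textbf{Comparison.} Your argument never uses the sign structure $G_t^{(s)}(0)=\max G_t^{(s)}$, nor the local limit theorem away from the origin, so it would go through for signed kernels as well. The paper's version is shorter and gives data with a more explicit geometry, but it depends on that unstated nonnegativity step.
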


\begin{coro}
\label{cor:no-uniform-rate-unit-mass}
Let \(d\ge1\), \(s\in(0,1]\), and let \(\psi:[1,\infty)\to(0,\infty)\) satisfy
\(\psi(t)\to0\) as \(t\to\infty\).
Then there is no constant \(C>0\) such that
\begin{equation}
\label{eq:no-uniform-rate-unit-mass}
t^{\frac d{2s}}
\|S_s(t)u_0-G_t^{(s)}\|_{\ell^\infty(\mathbb Z^d)}
\le
C\,\psi(t),
\quad
\forall t\ge1,\ \forall u_0\in\ell^1(\mathbb Z^d),\ u_0\ge0,\ \sum_{x\in \mathbb{Z}^d}u_0(x)=1.
\end{equation}
\end{coro}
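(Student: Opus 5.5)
We argue by contradiction and reduce the corollary to Proposition~\ref{prop:no-universal-rate-lattice}. Here \(S_s(t)u_0=G_t^{(s)}*u_0\) denotes the solution of \eqref{eq:cauchy-frac-lap}. Suppose some constant \(C>0\) makes \eqref{eq:no-uniform-rate-unit-mass} hold for all \(t\ge1\) and all nonnegative unit-mass \(u_0\).

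The first step is to replace \(\psi\), which need not be monotone, by a decreasing majorant to which the proposition applies. We set
\[
\phi(t):=\sup_{\tau\ge t}\psi(\tau)+\frac1t,\qquad t\ge1.
\]
Since \(\psi(\tau)\to0\), the supremum is finite for every \(t\ge1\) and tends to \(0\) as \(t\to\infty\). The term \(1/t\) guarantees \(\phi>0\) and strict decrease (we only need monotonicity). Hence \(\phi:[1,\infty)\to(0,\infty)\) is decreasing, \(\phi(t)\to0\), and \(\psi\le\phi\) pointwise.

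The second step applies Proposition~\ref{prop:no-universal-rate-lattice} to this \(\phi\). It produces a nonnegative \(u_0\in\ell^1(\mathbb Z^d)\) with \(\sum_x u_0(x)=1\) and times \(t_k\to\infty\) such that
\[
t_k^{\frac d{2s}}\|S_s(t_k)u_0-G_{t_k}^{(s)}\|_{\ell^\infty}\ge k\,\phi(t_k)
\]
for all \(k\). Since \(t_k\to\infty\), we have \(t_k\ge1\) for large \(k\). For those \(k\), the assumed bound and \(\psi\le\phi\) give
\[
k\,\phi(t_k)\le C\psi(t_k)\le C\phi(t_k).
\]
Dividing by \(\phi(t_k)>0\) yields \(k\le C\) for all large \(k\), which is a contradiction.

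The only delicate point is the monotonicity hypothesis of the proposition, since \(\psi\) is arbitrary. The regularized supremum \(\phi\) resolves this. It is also harmless that the proposition's \(t_k\) might be smaller than \(1\) for finitely many \(k\), because we only use large \(k\).
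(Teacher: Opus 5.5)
Your argument follows the paper's proof. You assume the uniform bound, feed a decay function into Proposition~\ref{prop:no-universal-rate-lattice}, and compare the lower bound \(k\,\phi(t_k)\) with the assumed upper bound at \(t=t_k\). The paper simply takes \(\phi:=C\psi\), which need not be decreasing as the proposition's statement requires. (The proposition's proof only uses \(\phi(t)\to0\) when choosing \(t_k\), so the hypothesis is harmless there.) Your monotone majorant closes that formal gap, so your version is actually more careful than the paper's.

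There is one small slip in that regularization step. \(\psi\) is only assumed positive with \(\psi(t)\to0\), not bounded on compact subsets of \([1,\infty)\). So \(\sup_{\tau\ge t}\psi(\tau)\) can be \(+\infty\) for \(t\) near \(1\); take for instance \(\psi(t)=|t-2|^{-1}\) near \(t=2\), \(t\neq2\). Your claim that ``the supremum is finite for every \(t\ge1\)'' is therefore false in general.

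The fix is immediate. Choose \(T\ge1\) with \(\psi\le1\) on \([T,\infty)\), and set \(\phi(t):=\sup_{\tau\ge\max\{t,T\}}\psi(\tau)+1/t\). Then \(\phi\) is finite, positive, decreasing, tends to \(0\), and dominates \(\psi\) on \([T,\infty)\). That is all you use, since \(t_k\to\infty\) means \(t_k\ge T\) for all large \(k\).
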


Next, we turn to the finite subgraphs setting. 
On a finite connected domain \(\Omega\subset V\) with Dirichlet condition outside \(\Omega\), the generator
\(L_{\Omega,s}^D\) is a finite-dimensional self-adjoint positive operator on \(\ell^2(\Omega,m)\), and the
associated semigroup has a purely discrete spectrum. As a consequence, the long-time behavior is not
self-similar (polynomial) as on \(\mathbb Z^d\), but rather governed by the spectral gap and the first
eigenvalue: after renormalization by \(e^{\mu_{1,s}t}\), the solution converges exponentially fast to a
multiple of the ground state \(\psi_{1,s}\). The next theorem records this first-mode asymptotics in all
\(\ell^p(\Omega,m)\) norms.

\begin{theorem}
\label{thm:finite-subgraph-first-mode}
Let \(\Omega\subset V\) be finite and connected, and let \(s\in(0,1]\).
Let \(u\) be the solution to \eqref{eq:finite-dirichlet-frac-heat-restricted} with
\(u_0\in \ell^2(\Omega,m)\). Then, for every \(1\le p\le\infty\), there exists
\(C_{p,\Omega}>0\) such that for all \(t\ge0\),
\begin{equation}
\label{eq:finite-subgraph-main-est}
\left\|
u(t)-e^{-\mu_{1,s} t}\,\langle u_0,\psi_{1,s}\rangle_{\ell^2(\Omega,m)}\,\psi_{1,s}
\right\|_{\ell^p(\Omega,m)}
\le
C_{p,\Omega}\,e^{-\mu_{2,s} t}\,\|u_0\|_{\ell^2(\Omega,m)}.
\end{equation}
Equivalently,
\begin{equation}
\label{eq:finite-subgraph-renormalized-limit}
\left\|
e^{\mu_{1,s} t}u(t)-\langle u_0,\psi_{1,s}\rangle_{\ell^2(\Omega,m)}\,\psi_{1,s}
\right\|_{\ell^p(\Omega,m)}
\le
C_{p,\Omega}\,e^{-(\mu_{2,s}-\mu_{1,s})t}\,\|u_0\|_{\ell^2(\Omega,m)}.
\end{equation}
In particular,
\[
e^{\mu_{1,s} t}u(t)\longrightarrow
\langle u_0,\psi_{1,s}\rangle_{\ell^2(\Omega,m)}\,\psi_{1,s}
\quad\text{in }\ell^p(\Omega,m),\qquad t\to\infty.
\]
\end{theorem}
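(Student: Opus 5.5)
The approach is a spectral decomposition in the finite-dimensional Hilbert space \(\ell^2(\Omega,m)\), followed by norm equivalence on a finite set.

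Since \(L_{\Omega,s}^D\) is self-adjoint and positive on \(\ell^2(\Omega,m)\) and \(\dim\ell^2(\Omega,m)=|\Omega|=:N<\infty\), it has an orthonormal eigenbasis \(\psi_{1,s},\dots,\psi_{N,s}\) with eigenvalues \(0<\mu_{1,s}\le\mu_{2,s}\le\cdots\le\mu_{N,s}\). These are the objects \(\mu_{k,s},\psi_{k,s}\) appearing in the statement. The solution of \eqref{eq:finite-dirichlet-frac-heat-restricted} is \(u(t)=e^{-tL_{\Omega,s}^D}u_0\), and expanding \(u_0=\sum_k\langle u_0,\psi_{k,s}\rangle\psi_{k,s}\) gives
\[
u(t)-e^{-\mu_{1,s}t}\langle u_0,\psi_{1,s}\rangle\psi_{1,s}=\sum_{k=2}^N e^{-\mu_{k,s}t}\langle u_0,\psi_{k,s}\rangle\psi_{k,s}=:R(t).
\]

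First, we prove the \(\ell^2\) estimate. By Parseval and the ordering \(\mu_{k,s}\ge\mu_{2,s}\) for \(k\ge2\),
\[
\|R(t)\|_{\ell^2(\Omega,m)}^2=\sum_{k\ge2}e^{-2\mu_{k,s}t}|\langle u_0,\psi_{k,s}\rangle|^2\le e^{-2\mu_{2,s}t}\|u_0\|_{\ell^2(\Omega,m)}^2 .
\]

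Second, we pass to \(\ell^p\). Since \(\Omega\) is finite and \(m\) is positive on \(\Omega\), all \(\ell^p(\Omega,m)\) norms are equivalent. Concretely, with \(m_{\min}=\min_\Omega m\) and \(m(\Omega)=\sum_\Omega m\),
\[
\|f\|_{\ell^\infty}\le m_{\min}^{-1/2}\|f\|_{\ell^2},\qquad
\|f\|_{\ell^p}\le m(\Omega)^{1/p}\|f\|_{\ell^\infty}.
\]
Hence \(\|f\|_{\ell^p}\le C_{p,\Omega}\|f\|_{\ell^2}\) for every \(1\le p\le\infty\). Applying this to \(R(t)\) yields \eqref{eq:finite-subgraph-main-est}.

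Third, multiplying \eqref{eq:finite-subgraph-main-est} by \(e^{\mu_{1,s}t}\) gives \eqref{eq:finite-subgraph-renormalized-limit}. The limit statement then needs \(\mu_{2,s}>\mu_{1,s}\), i.e. that the principal eigenvalue is simple. This is the only nontrivial point.

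For the simplicity of \(\mu_{1,s}\), I would use a Perron--Frobenius argument. The semigroup \(e^{-tL_{\Omega,s}^D}\) is positivity improving on the connected set \(\Omega\); the paper says it treats this property in a separate result, and I would invoke that. Alternatively, one can check it directly. The off-diagonal entries of \(L_{\Omega,s}^D\) are nonpositive. For \(s=1\) they are negative exactly along the edges of \(\Omega\). For \(s<1\), the fractional power \(L^s=\frac{s}{\Gamma(1-s)}\int_0^\infty(I-e^{-\tau L})\tau^{-1-s}\,d\tau\) inherits negative off-diagonal entries wherever \(e^{-\tau L}\) has positive entries. In either case, connectedness of \(\Omega\) makes the matrix irreducible, so \(e^{-tL_{\Omega,s}^D}\) has strictly positive entries for \(t>0\). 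Perron--Frobenius then shows that the top eigenvalue \(e^{-\mu_{1,s}t}\) is simple with a positive eigenvector, which gives \(\mu_{2,s}>\mu_{1,s}\). I expect this step to be the main obstacle, particularly keeping track of the restriction to \(\Omega\) in the fractional case. Everything else is routine.
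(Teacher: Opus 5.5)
Your proof is correct and follows the paper's argument essentially verbatim: spectral expansion of \(u(t)=e^{-tL_{\Omega,s}^D}u_0\), the \(\ell^2\) tail bound from orthonormality and \(\mu_{k,s}\ge\mu_{2,s}\) for \(k\ge2\), equivalence of the \(\ell^p(\Omega,m)\)-norms on the finite set \(\Omega\), and multiplication by \(e^{\mu_{1,s}t}\). The one addition is your Perron--Frobenius proof that \(\mu_{2,s}>\mu_{1,s}\), which is needed only for the final limit statement and which the paper merely asserts when introducing the eigenbasis in Section~\ref{Setting and Equation}. Your argument supplies this correctly, provided that for \(0<s<1\) the \(L\) in your subordination formula is the whole-graph Laplacian, with the resulting kernel then compressed to \(\Omega\times\Omega\), as in the proof of Proposition~\ref{prop:finite-dirichlet-positivity-improving}; this gives off-diagonal entries \(-W_s(x,y)<0\). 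It should not be the Dirichlet Laplacian on \(\Omega\), whose \(s\)-th power is the spectral rather than the restricted operator.
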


Finally, we record a qualitative feature shared by both dynamics considered in this paper:
strict positivity propagates instantly.  On the full lattice \(\mathbb Z^d\), the fractional heat
semigroup \(S_s(t)=e^{-t(-\Delta)^s}\) is a convolution operator with kernel \(G_t^{(s)}\), and the
underlying jump process makes the heat kernel strictly positive at every lattice site for any
positive time.  On a finite connected domain with Dirichlet condition, the restricted
Dirichlet operator \(L_{\Omega,s}^D\) generates a finite-dimensional positive semigroup
\(S_{\Omega,s}(t)=e^{-tL_{\Omega,s}^D}\); irreducibility of the associated generator implies that the
matrix \(S_{\Omega,s}(t)\) has strictly positive entries for every \(t>0\).  In both settings, any
nontrivial nonnegative initial datum yields a solution that becomes strictly positive everywhere
for all \(t>0\).

\begin{proposition}
\label{prop:zd-positivity-improving}
\(S_s(t)\) is positivity improving on \(\ell^p(\mathbb Z^d)\), \(1\le p\le\infty\):
\[
u_0\ge0,\ u_0\not\equiv0
\ \Longrightarrow\
(S_s(t)u_0)(x)>0,\ \forall x\in\mathbb Z^d,\ \forall t>0.
\]
In particular, if \(u_0\ge0\), \(u_0\not\equiv0\), then the solution to
\eqref{eq:cauchy-frac-lap} satisfies
\[
u(t,x)>0,\quad \forall t>0,\ \forall x\in\mathbb Z^d.
\]
\end{proposition}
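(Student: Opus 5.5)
The argument reduces everything to strict positivity of the kernel. We will show that \(G_t^{(s)}(z)>0\) for every \(t>0\) and \(z\in\mathbb Z^d\). Given this, for \(u_0\ge0\), \(u_0\not\equiv0\), pick \(y_0\) with \(u_0(y_0)>0\). Then
\[
(S_s(t)u_0)(x)=\sum_{y}G_t^{(s)}(x-y)u_0(y)\ge G_t^{(s)}(x-y_0)u_0(y_0)>0 .
\]
For \(u_0\in\ell^p\) with \(p>1\) the convolution still converges absolutely, because \(G_t^{(s)}\in\ell^1\cap\ell^\infty\subset\ell^{p'}\). Here \(G_t^{(s)}\in\ell^1\) by Proposition~\ref{prop:l1-sharp-by-splitting}-type bounds, or directly from the subordination formula below, which gives \(\sum_z G_t^{(s)}(z)=1\).

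\textbf{Case \(s=1\).} The multiplier factorizes: \(e^{-t\omega(\xi)}=\prod_j e^{-2t}e^{2t\cos\xi_j}\). Hence \(G_t^{(1)}(x)=\prod_{j=1}^d e^{-2t}I_{x_j}(2t)\), where \(I_n\) is the modified Bessel function. Each factor is strictly positive, since \(I_{n}(r)=\sum_k (r/2)^{2k+|n|}/(k!(k+|n|)!)>0\) for \(r>0\). Equivalently, write \(-\Delta=2d\,I-A\), with \(A\) the adjacency operator, so that \(e^{t\Delta}=e^{-2dt}e^{tA}=e^{-2dt}\sum_n t^nA^n/n!\). Every entry of \(A^n\) is nonnegative, and the \((x,0)\) entry is positive for \(n=|x|_1\), by connectivity of \(\mathbb Z^d\).

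\textbf{Case \(0<s<1\).} We use Bochner subordination. There is a one-sided \(s\)-stable density \(\eta_t^{(s)}(\tau)\), strictly positive on \((0,\infty)\), with
\[
e^{-t\lambda^s}=\int_0^\infty e^{-\tau\lambda}\eta_t^{(s)}(\tau)\,d\tau\quad(\lambda\ge0).
\]
We substitute \(\lambda=\omega(\xi)\) and apply Fubini, which is justified because everything is bounded and \(\eta_t^{(s)}\) is a probability density. This gives
\[
G_t^{(s)}(x)=\int_0^\infty G_\tau^{(1)}(x)\,\eta_t^{(s)}(\tau)\,d\tau .
\]
The integrand is strictly positive for every \(\tau>0\) by the case \(s=1\), so \(G_t^{(s)}(x)>0\).

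\textbf{Main obstacle.} The only delicate input is strict positivity of the subordinator density, or at least its positivity on a set of positive measure. For this we can cite the classical fact that \(\lambda\mapsto\lambda^s\) is a Bernstein function, so \(e^{-t\lambda^s}\) is completely monotone and by Bernstein's theorem is the Laplace transform of a probability measure \(\nu_t\) on \([0,\infty)\). Nonnegativity of \(\nu_t\) is all we need, together with \(\nu_t\neq\delta_0\): if \(\nu_t=\delta_0\), the Laplace transform would be identically \(1\), but \(e^{-t\lambda^s}\) is not constant. The formula above with \(\nu_t\) in place of \(\eta_t^{(s)}\,d\tau\) therefore gives
\[
G_t^{(s)}(x)\ge\int_{(0,\infty)}G_\tau^{(1)}(x)\,d\nu_t(\tau)>0 .
\]
This holds because \(\nu_t((0,\infty))>0\) and the integrand is strictly positive on \((0,\infty)\). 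The same representation, evaluated at \(\xi=0\) where \(\omega=0\), confirms \(\sum_x G_t^{(s)}(x)=1\), so \(G_t^{(s)}\in\ell^1\).
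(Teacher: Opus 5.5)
Your proof is correct and follows essentially the same route as the paper. Both arguments show strict positivity of the random-walk kernel \(G_t^{(1)}\), transfer it to \(0<s<1\) by Bochner subordination \(G_t^{(s)}(x)=\int G_\tau^{(1)}(x)\,d\nu_t(\tau)\) with a nonnegative, nontrivial subordinator measure, and conclude by positivity of the convolution against a nonzero nonnegative datum; your Bessel-product formula for \(G_t^{(1)}\) and your Bernstein-function argument for \(\nu_t\ge0\), \(\nu_t\neq\delta_0\) simply make explicit facts that the paper asserts without proof.
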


\begin{proposition}
\label{prop:finite-dirichlet-positivity-improving}
For every \(t>0\), \(S_{\Omega,s}(t)\) is positivity improving:
\[
u_0\ge0,\ u_0\not\equiv0
\ \Longrightarrow\
\big(S_{\Omega,s}(t)u_0\big)(x)>0,\ \forall x\in\Omega.
\]
In particular, if \(u_0\ge0\), \(u_0\not\equiv0\), then the solution to
\eqref{eq:finite-dirichlet-frac-heat-restricted} satisfies
\[
u(t,x)>0,\quad \forall t>0,\ \forall x\in\mathbb Z^d.
\]
\end{proposition}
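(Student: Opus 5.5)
The idea is to reduce positivity improvement to strict positivity of every off-diagonal entry of the semigroup matrix $S_{\Omega,s}(t)=e^{-tL_{\Omega,s}^D}$. The finite-dimensional Perron--Frobenius route does this: write $L_{\Omega,s}^D$, acting on $\ell^2(\Omega,m)$, as a $|\Omega|\times|\Omega|$ matrix $A$, and show two things about it.

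\textbf{Step 1 (sign structure).} Show that $A$ is a Z-matrix, i.e. $A_{xy}\le0$ for $x\ne y$.

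For $s=1$ this is immediate. The Dirichlet graph Laplacian is
\[
(L_{\Omega,1}^Du)(x)=\frac1{m(x)}\sum_y w_{xy}\bigl(u(x)-u(y)\bigr)
\]
with $u=0$ off $\Omega$. Its off-diagonal entries are $-w_{xy}/m(x)\le0$, and they are strictly negative exactly when $x\sim y$.

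For $0<s<1$ I would use the Balakrishnan/subordination formula for the full-graph operator $L$:
\[
L^s f=\frac{s}{\Gamma(1-s)}\int_0^\infty\bigl(f-e^{-\tau L}f\bigr)\tau^{-1-s}\,d\tau .
\]
For $x\ne y$ in $\Omega$, apply it to $f=\delta_y$ (zero-extended) and evaluate at $x$. This gives
\[
A_{xy}=-\frac{s}{\Gamma(1-s)}\int_0^\infty p_\tau(x,y)\,\tau^{-1-s}\,d\tau,
\]
where $p_\tau$ is the heat kernel of the full graph. Since $p_\tau(x,y)>0$ for all $\tau>0$ on a connected graph, this entry is strictly negative. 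If the paper instead defines the fractional power spectrally, via a formula of the form $\sum_{x,y}(u(x)-u(y))^2 J_s(x,y)$ with $J_s>0$, the same conclusion follows.

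\textbf{Step 2 (irreducibility).} Since $\Omega$ is connected, the graph on $\Omega$ with edges $\{x,y\}$ where $A_{xy}<0$ is connected. For $s=1$ this is because every edge of $\Omega$ gives a negative entry. For $s<1$ every off-diagonal entry is already negative, so the statement is trivial.

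\textbf{Step 3 (exponentiation).} Pick $c>\max_x A_{xx}$ and set $B=cI-A$. Then $B$ has nonnegative entries and is irreducible, and
\[
e^{-tA}=e^{-ct}e^{tB}=e^{-ct}\sum_{k\ge0}\frac{t^k}{k!}B^k .
\]
For any $x,y\in\Omega$ there is a path $x=z_0,\dots,z_k=y$ with $B_{z_{i}z_{i+1}}>0$, so $(B^k)_{xy}>0$. All terms in the series are nonnegative, hence $(e^{tB})_{xy}>0$ for every $t>0$.

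Then for $u_0\ge0$ with $u_0\not\equiv0$, pick $y$ with $u_0(y)>0$. We get
\[
(S_{\Omega,s}(t)u_0)(x)\ge S_{\Omega,s}(t)_{xy}\,u_0(y)>0 \quad\text{for all } x\in\Omega .
\]
The "in particular" statement follows because the solution is $S_{\Omega,s}(t)u_0$ on $\Omega$. It is positive for $x\in\Omega$ only; the stated "$\forall x\in\mathbb Z^d$" should read $x\in\Omega$, since $u=0$ off $\Omega$.

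\textbf{Main obstacle.} The hard step is Step 1 for $s<1$: proving that the restricted fractional operator has nonpositive, indeed strictly negative, off-diagonal entries. This depends on the precise definition in Section~\ref{Setting and Equation}. I would try the subordination formula first, which requires positivity of the full-graph heat kernel. That positivity is itself the graph analogue of Proposition~\ref{prop:zd-positivity-improving} and follows from Steps 1–3 applied to $L$ on finite exhaustions, or directly from the series above.
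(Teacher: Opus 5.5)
Your proposal is correct and follows essentially the same route as the paper. Both arguments show that $-L_{\Omega,s}^D$ is an irreducible Metzler matrix and then deduce that $e^{-tL_{\Omega,s}^D}$ is entrywise positive. For $0<s<1$ this uses the jump-kernel formula $W_s(x,y)=\frac{s}{\Gamma(1-s)}\int_0^\infty p(t,x,y)\,t^{-1-s}\,dt>0$, which makes every off-diagonal entry strictly negative; for $s=1$ it uses connectedness of $\Omega$.

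The only difference is in the exponentiation step: you prove it by hand via $B=cI-A$ and the path argument in the power series, whereas the paper cites Proposition~2.1 of the Arora et al.\ reference. Your observation that the final conclusion should read $x\in\Omega$ rather than $x\in\mathbb Z^d$ is also correct.
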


\smallskip

    The remainder of this paper is organized as follows. In Section~2, we first introduce the discrete Laplacian and the fractional Laplacian on \(\mathbb Z^d\), together with the corresponding Fourier representation and semigroup framework. We then establish the two key kernel increment estimates, namely the sharp pointwise bound and the sharp \(\ell^1\)-bound, which form the technical backbone of the large-time analysis. Building on these estimates, we prove Theorem~\ref{thm:lattice-first-order}  and Theorem~\ref{thm:lattice-asymptotic-no-moment}. We also show that the rate \(t^{-1/(2s)}\) in Theorem~\ref{thm:lattice-first-order} is optimal, and we discuss the non-existence of a universal quantitative rate in the whole \(\ell^1\) class without moment assumptions. In Section~3, we study the Dirichlet problem on finite subgraphs: we define the Dirichlet Laplacian and the corresponding fractional Dirichlet operators, and then prove the large-time asymptotic behavior of solutions. Finally, in Section~4, we investigate positivity improving properties of the associated semigroups, for both the full lattice evolution and the Dirichlet evolution on finite subgraphs.

\section{Fractional Diffusion on the Lattice}

In this section, we study fractional diffusion on the discrete spatial domain
\(\mathbb Z^d\). We begin by introducing the lattice Laplacian and its fractional
powers, together with their Fourier representation on \(\mathbb T^d\) and the
associated semigroup formulation of the evolution problem. These structural tools
provide the analytic framework for the large-time analysis carried out in the rest
of the section.

We then investigate the asymptotic behavior of solutions as \(t\to\infty\).
First, we establish pointwise and \(\ell^1\)-bounds for kernel increments, which
yield the key \(\ell^\infty\)- and \(\ell^1\)-type controls for the error
\(u(t)-M G_t^{(s)}\). Based on these estimates, we derive the optimal convergence
rate in the class of initial data with finite first moment and prove the
corresponding sharpness result. Finally, by an approximation argument, we extend
the asymptotic convergence theory from the finite-first-moment setting to the
general \(\ell^1\)-class, obtaining convergence without a universal quantitative
rate in full generality.

	\subsection{Foundations of Discrete Operators}\label{Foundations of Discrete Operators}

In this section, the spatial domain is the integer lattice
\[
\mathbb{Z}^d=\{x=(x_1,\dots,x_d):x_j\in\mathbb{Z}\}.
\]
We denote by $\ell^p(\mathbb{Z}^d)$, $1\le p\le\infty$, the usual sequence spaces with norms
\[
\|u\|_{\ell^p}=
\begin{cases}
\left(\sum_{x\in\mathbb{Z}^d}|u(x)|^p\right)^{1/p}, & 1\le p<\infty,\\[1mm]
\sup_{x\in\mathbb{Z}^d}|u(x)|, & p=\infty.
\end{cases}
\]
The counting measure is always understood on $\mathbb{Z}^d$. The dual (frequency) domain of $\mathbb{Z}^d$ is the $d$-torus
\[
\mathbb{T}^d=[-\pi,\pi]^d.
\]

For $x\in\mathbb{Z}^d$, we write $y\sim x$ if $y=x\pm e_j$ for some $j\in\{1,\dots,d\}$.
The discrete Laplacian on $\mathbb{Z}^d$ is defined by
\[
-\Delta u(x):=\sum_{y\sim x}\bigl(u(x)-u(y)\bigr),\qquad x\in\mathbb{Z}^d.
\]
Equivalently,
\[
\Delta u(x)=\sum_{j=1}^d\bigl(u(x+e_j)+u(x-e_j)-2u(x)\bigr).
\]

The Fractional Laplacian $(-\Delta)^s$ for $0<s<1$ is defined via the Bochner integral formula (see \cite{chen2025logarithmic}):
\[(-\Delta)^{s}u=\frac{s}{\Gamma(1-s)}
	\int_{0}^{\infty}\bigl(u-e^{t\Delta}u\bigr)\,t^{-1-s}\,dt, \quad u\in \ell^{\infty}(\mathbb{Z}^d).\]
It is straightforward to verify that $\left(-\Delta\right)^{s}u \in \ell^1(\mathbb{Z}^d)$  for every $u\in\ell^1(\mathbb{Z}^d)$. By \cite[Proposition 1.6]{chen2025logarithmic}, \((-\Delta)^s u\to -\Delta u\) in \(\ell^\infty(\mathbb{Z}^d)\) as \(s\to1^-\) for every \(u\in\ell^\infty(\mathbb{Z}^d)\).

The following proposition shows that, for every $1\le p\le\infty$, both $-\Delta$ and $(-\Delta)^s$ ($0<s<1$) are bounded linear operators on $\ell^p(\mathbb Z^d)$.

\begin{proposition}\label{prop:lp-boundedness-lap-frac}
For every \(1\le p\le \infty\) and \(0<s<1\), the Laplacian $-\Delta$ and the fractional Laplacian $(-\Delta)^s$ both define bounded linear maps on \(\ell^p(\mathbb Z^d)\). More precisely:
\[
\|-\Delta u\|_{\ell^p}\le 4d\,\|u\|_{\ell^p},\quad
\|(-\Delta)^s u\|_{\ell^p}\le\frac{2^{1+s}d^s}{(1-s)\Gamma(1-s)}\|u\|_{\ell^p}.
\]
\end{proposition}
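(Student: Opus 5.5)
The estimate for \(-\Delta\) comes directly from the definition. Write \(-\Delta u=2d\,u-\sum_{j=1}^d(\tau_{e_j}u+\tau_{-e_j}u)\), where \(\tau_h u(x)=u(x+h)\). Each translation \(\tau_{\pm e_j}\) is an isometry of \(\ell^p(\mathbb Z^d)\) for every \(1\le p\le\infty\). Minkowski's inequality therefore gives
\[
\|-\Delta u\|_{\ell^p}\le 2d\|u\|_{\ell^p}+2d\|u\|_{\ell^p}=4d\|u\|_{\ell^p}.
\]
Linearity is obvious.

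For \((-\Delta)^s\) I will use the Bochner formula and split the \(t\)-integral at some \(T>0\) to be optimized later. The needed facts about the heat semigroup \(e^{t\Delta}\) are the following.

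\begin{enumerate}[(i)]
\item \(e^{t\Delta}\) is a contraction on \(\ell^p\), so \(\|u-e^{t\Delta}u\|_{\ell^p}\le 2\|u\|_{\ell^p}\). This follows because \(e^{t\Delta}\) is convolution with the nonnegative kernel \(G_t^{(1)}\) of total mass \(1\). Nonnegativity holds since \(e^{t\Delta}=e^{-2dt}e^{t A}\), where \(A\) is the adjacency operator with nonnegative entries; the mass equals \(\widehat{G_t^{(1)}}(0)=1\). Young's inequality then gives the contraction.
\item \(\|u-e^{t\Delta}u\|_{\ell^p}\le t\,\|\Delta\|_{p\to p}\|u\|_{\ell^p}\le 4d\,t\,\|u\|_{\ell^p}\). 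This follows from \(u-e^{t\Delta}u=-\int_0^t e^{r\Delta}\Delta u\,dr\), which is valid because \(\Delta\) is bounded, so the exponential series converges in operator norm. One then applies (i) together with the first part.
\end{enumerate}

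These bounds also justify that the Bochner integral converges absolutely in \(\ell^p\): the integrand is \(O(t^{-s})\) near \(0\) and \(O(t^{-1-s})\) at infinity. So \((-\Delta)^s u\) is well defined on \(\ell^p\), and it is linear.

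Combining (i) and (ii),
\[
\|(-\Delta)^s u\|_{\ell^p}\le\frac{s}{\Gamma(1-s)}\Big(4d\int_0^T t^{-s}\,dt+2\int_T^\infty t^{-1-s}\,dt\Big)\|u\|_{\ell^p}
=\frac{s}{\Gamma(1-s)}\Big(\frac{4d\,T^{1-s}}{1-s}+\frac{2T^{-s}}{s}\Big)\|u\|_{\ell^p}.
\]
Now choose \(T=1/(2d)\). Then \(4dT^{1-s}=2(2d)^{s}\) and \(2T^{-s}=2(2d)^s\), so the bracket equals \(2(2d)^s\big(\tfrac1{1-s}+\tfrac1s\big)=\dfrac{2(2d)^s}{s(1-s)}\). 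The bound becomes
\[
\|(-\Delta)^s u\|_{\ell^p}\le\frac{2^{1+s}d^s}{(1-s)\Gamma(1-s)}\|u\|_{\ell^p},
\]
which is exactly the stated constant.

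The only step needing care is (i), the positivity and mass-one property of the discrete heat kernel, which yields the uniform contraction bound for all \(p\) including \(p=\infty\). The rest is arithmetic.
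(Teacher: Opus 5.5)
Your proposal is correct and follows the paper's proof essentially verbatim: the triangle inequality for $-\Delta$, then the Bochner formula with $\|u-e^{t\Delta}u\|_{\ell^p}\le\min\{2,4dt\}\|u\|_{\ell^p}$, where your split at $T=1/(2d)$ is exactly the crossover point of the paper's $\min\{2,4dt\}$ and gives the same constant. Your justification of the $\ell^p$-contraction via nonnegativity and unit mass of the discrete heat kernel fills in a step the paper only asserts; note that with the paper's Fourier normalization the unit-mass identity reads $(2\pi)^{d/2}\widehat{G_t^{(1)}}(0)=1$.
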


\begin{proof}
Using
\[
\Delta u(x)=\sum_{j=1}^d\bigl(u(x+e_j)+u(x-e_j)-2u(x)\bigr),
\]
we get by triangle inequality and translation invariance of the \(\ell^p\)-norm:
\[
\|\Delta u\|_{\ell^p}
\le \sum_{j=1}^d\Bigl(\|u(\cdot+e_j)\|_{\ell^p}
+\|u(\cdot-e_j)\|_{\ell^p}+2\|u\|_{\ell^p}\Bigr)
=4d\,\|u\|_{\ell^p}.
\]

By the Bochner formula,
\[
(-\Delta)^s u=\frac{s}{\Gamma(1-s)}
\int_0^\infty \bigl(u-e^{t\Delta}u\bigr)\,t^{-1-s}\,dt.
\]
For the heat semigroup \((e^{t\Delta})_{t\ge0}\), we use that it is an
\(\ell^p\)-contraction:
\[
\|e^{t\Delta}u\|_{\ell^p}\le \|u\|_{\ell^p},\qquad t\ge0.
\]
Hence
\[
\|u-e^{t\Delta}u\|_{\ell^p}\le 2\|u\|_{\ell^p}.
\]
Also,
\[
u-e^{t\Delta}u
=-\int_0^t \frac{d}{d\tau}\bigl(e^{\tau\Delta}u\bigr)\,d\tau
=-\int_0^t \Delta e^{\tau\Delta}u\,d\tau,
\]
thus
\[
\|u-e^{t\Delta}u\|_{\ell^p}
\le \int_0^t \|\Delta e^{\tau\Delta}u\|_{\ell^p}\,d\tau
\le 4d\,t\,\|u\|_{\ell^p}.
\]
Therefore
\[
\|u-e^{t\Delta}u\|_{\ell^p}
\le \min\{2,\,4d\,t\}\,\|u\|_{\ell^p}.
\]
So
\[
\|(-\Delta)^s u\|_{\ell^p}
\le \frac{s}{\Gamma(1-s)}
\left(\int_0^\infty \min\{2,4d\,t\}\,t^{-1-s}\,dt\right)\|u\|_{\ell^p}= \frac{2^{1+s}d^s}{(1-s)\Gamma(1-s)}\|u\|_{\ell^p},
\]
which proves boundedness on \(\ell^p(\mathbb Z^d)\), \(1\le p\le\infty\).
\end{proof}

Next, we introduce the Fourier transform on the lattice $\mathbb Z^d$. For any $u\in \ell^{1}(\mathbb{Z}^d)$, define its Fourier transform
$\widehat u:\mathbb{T}^d\to\mathbb{C}$ by (see \cite[Chapter 3]{Grafakos2014})
\[
\widehat u(\xi)
=\frac{1}{(2\pi)^{d/2}}
\sum_{x\in\mathbb{Z}^d}u(x)e^{-i\langle x,\xi\rangle},
\qquad
\xi\in[-\pi,\pi]^d,
\]
where
\[
\langle x,\xi\rangle=\sum_{j=1}^d x_j\,\xi_j.
\]
Hence $\widehat u\in L^\infty(\mathbb{T}^d)$ and
\[
\|\widehat u\|_{L^\infty(\mathbb{T}^d)}
\le (2\pi)^{-d/2}\|u\|_{\ell^1(\mathbb{Z}^d)}.
\]

The inversion formula is
\[
u(x)
=\frac{1}{(2\pi)^{d/2}}
\int_{\mathbb{T}^d}\widehat u(\xi)\,e^{i\langle x,\xi\rangle}\,d\xi,
\qquad x\in\mathbb{Z}^d.
\]
Moreover, for $u,v\in \ell^1(\mathbb{Z}^d)$ one has
\[
\widehat{u*v}(\xi)=(2\pi)^{d/2}\,\widehat u(\xi)\widehat v(\xi),
\]
where the discrete convolution is
\[
(u*v)(x):=\sum_{y\in\mathbb{Z}^d}u(x-y)v(y).
\]

The transform extends by density to a unitary map
\[
\mathcal{F}:\ell^2(\mathbb{Z}^d)\longrightarrow L^2(\mathbb{T}^d),
\]
with Plancherel identity
\[
\|u\|_{\ell^2(\mathbb{Z}^d)}=\|\widehat u\|_{L^2(\mathbb{T}^d)},
\]
and Parseval identity
\[
\sum_{x\in\mathbb{Z}^d}u(x)\overline{v(x)}
=
\int_{\mathbb{T}^d}\widehat u(\xi)\,\overline{\widehat v(\xi)}\,d\xi.
\]

For later use, we also record the translation and difference multipliers.
If $\tau_j u(x):=u(x+e_j)$, then
\[
\widehat{\tau_j u}(\xi)=e^{i\xi_j}\widehat u(\xi).
\]
Hence, for the discrete Laplacian, its Fourier symbol is
\[
\widehat{-\Delta u}(\xi)=
\omega(\xi)\widehat u(\xi),
\qquad
\omega(\xi):=4\sum_{i=1}^d\sin^2\!\left(\frac{\xi_i}{2}\right).
\]
Similarly, as shown in \cite{chen2025logarithmic}, for every $u\in \ell^1(\mathbb{Z^d)},$
\[
\widehat{(-\Delta )^s u}(\xi)=
\omega(\xi)^s\widehat u(\xi).
\]
Consequently, spectral multipliers of $-\Delta$ and $(-\Delta)^s$ are diagonalized on $\mathbb{T}^d$ via multiplication with functions of $\omega(\xi)$ and $\omega(\xi)^s$, respectively.

We now present a unified treatment of the semigroup generated by $(-\Delta)^s$ on $\mathbb Z^d$, for all $s\in(0,1]$. By Proposition~\ref{prop:lp-boundedness-lap-frac}, we have
\[
(-\Delta)^s\in \mathcal L(\ell^p(\mathbb Z^d)),\qquad 1\le p\le\infty.
\]
Hence, by the bounded-generator semigroup construction (cf. \cite[Chapter 2.1]{keller2021graphs}), for every \(t\ge0\) we define
\begin{equation}\label{semig}
    S_s(t):=e^{-t(-\Delta)^s}
=\sum_{n=0}^\infty \frac{(-t)^n}{n!}\bigl((-\Delta)^s\bigr)^n,
\end{equation}
and \(\{S_s(t)\}_{t\ge0}\) is a strongly continuous contraction semigroup on \(\ell^p(\mathbb Z^d),1\le p\le \infty\), with
\[
\partial_t S_s(t)u
=
-(-\Delta)^s S_s(t)u
=
-S_s(t)(-\Delta)^s u,
\qquad t\ge0.
\]

Therefore, for any initial datum \(u_0\in\ell^p(\mathbb Z^d)\), the function
\[
u(t):=S_s(t)u_0
\]
solves
\[
\partial_t u(t)+(-\Delta)^s u(t)=0,\qquad u(0)=u_0,
\]
in the strong sense on \(\ell^p(\mathbb Z^d)\).  
Moreover, uniqueness for this Cauchy problem follows from the uniqueness-of-semigroups principle for bounded generators (cf. \cite[Exercise 2.8]{keller2021graphs}): any solution with the same initial datum must coincide with \(S_s(t)u_0\).

Using
\[
\widehat{(-\Delta)^s u}(\xi)=\omega(\xi)^s\widehat u(\xi),
\qquad
\omega(\xi):=4\sum_{j=1}^d \sin^2\!\left(\frac{\xi_j}{2}\right),
\]
we obtain
\[
\widehat{S_s(t)u}(\xi)=e^{-t\omega(\xi)^s}\widehat u(\xi).
\]
Hence, \(S_s(t)\) is a convolution operator:
\[
S_s(t)u=G_t^{(s)}*u,
\]
where
\[    G_t^{(s)}(x)
=
\frac{1}{(2\pi)^{d}}
\int_{\mathbb T^d}
e^{-t\omega(\xi)^s}e^{i\langle x,\xi\rangle}\,d\xi,
\qquad x\in\mathbb Z^d.\]

Moreover, 
\begin{equation}\label{mass}
    \sum_{x\in\mathbb Z^d}G_t^{(s)}(x)
=
(2\pi)^{d/2}\widehat{G_t^{(s)}}(0)
=1,\qquad t>0.
\end{equation}
 Consequently, \(S_s(t)\) is mass preserving: for every \(u_0\in\ell^1(\mathbb Z^d)\),
\[
\sum_{x\in\mathbb Z^d}(S_s(t)u_0)(x)=\sum_{x\in\mathbb Z^d}u_0(x).
\]

    
\subsection{Large-time Asymptotic Convergence}

In this subsection, we establish the long-time convergence of lattice fractional-diffusion solutions toward the fundamental kernel profile, together with quantitative rates under finite first-moment assumptions and qualitative convergence in the general \(\ell^1\) setting.

Let \(d\ge 1\), \(s\in(0,1]\). For $u_0\in \ell^1(\mathbb{Z}^d)$, consider the following Cauchy problem on \(\mathbb Z^d\):
\begin{equation}
\begin{cases}
\partial_t u(t,x)+(-\Delta)^s u(t,x)=0, & t>0,\ x\in\mathbb Z^d,\\[2mm]
u(0,x)=u_0(x), & x\in\mathbb Z^d.
\end{cases}
\end{equation}
The above problem admits a unique solution in \(\ell^1(\mathbb Z^d)\), given by
\[
u(t,x)=S_s(t)u_0=e^{-t(-\Delta)^s}u_0,\qquad t\ge0.
\]

Recall that
\[
G_t^{(s)}(x):=\frac{1}{(2\pi)^{d}}\int_{\mathbb T^d}e^{-t\omega(\xi)^s}e^{i\langle x,\xi\rangle}\,d\xi,
\qquad
\omega(\xi)=4\sum_{j=1}^d\sin^2\!\Bigl(\frac{\xi_j}{2}\Bigr).
\]
Then the solution
\[
u(t,x)=G_t^{(s)}*u_0=\sum_{y\in\mathbb Z^d}G_t^{(s)}(x-y)\,u_0(y).
\]
For \(u_0\in \ell^1(\mathbb Z^d)\), define
\[
M:=\sum_{x\in\mathbb Z^d}u_0(x),
\qquad
\mathcal N_1(u_0):=\sum_{x\in\mathbb Z^d}|x|\,|u_0(x)|.
\]

\subsubsection{Pointwise and \texorpdfstring{$\ell^1$}{l1} Bounds for Kernel Differences}
In this part, we derive sharp pointwise and \(\ell^1\)-difference estimates for translated lattice heat kernels, which provide the core quantitative input for the subsequent asymptotic analysis.

\begin{proof}[\bf Proof of Proposition \ref{prop:kernel-increment-pointwise}.]
Fix \(j\in\{1,\dots,d\}\). By Fourier inversion (\ref{gaussian}),
\[
G_t^{(s)}(x+e_j)-G_t^{(s)}(x)
=
\frac1{(2\pi)^{d}}\int_{\mathbb T^d}
\bigl(e^{i\xi_j}-1\bigr)e^{-t\omega(\xi)^s}e^{i\langle x,\xi\rangle}\,d\xi.
\]
Hence
\[\sup_{x\in\mathbb Z^d}|G_t^{(s)}(x+e_j)-G_t^{(s)}(x)|
\le
\frac1{(2\pi)^{d}}\int_{\mathbb T^d}|e^{i\xi_j}-1|\,e^{-t\omega(\xi)^s}\,d\xi.\]
Using \(|e^{i\theta}-1|\le |\theta|\), we have
\[
|e^{i\xi_j}-1|\le |\xi|,\qquad \xi\in\mathbb T^d.
\]
By the fact that
\[
\omega(\xi)\ge \frac{4}{\pi^2}|\xi|^2,\qquad \xi\in\mathbb T^d,
\]
we obtain
\[
\sup_{x\in\mathbb Z^d}|G_t^{(s)}(x+e_j)-G_t^{(s)}(x)|
\le
\frac1{(2\pi)^{d}}\int_{\mathbb T^d} |\xi|\,e^{-\frac{4^s}{\pi^{2s}} t|\xi|^{2s}}\,d\xi.
\]
    Now set \(\eta=t^{1/(2s)}\xi\). Then:
\[\int_{\mathbb T^d} |\xi|\,e^{-\frac{4^s}{\pi^{2s}} t|\xi|^{2s}}\,d\xi
=
t^{-\frac{d+1}{2s}}
\int_{t^{1/(2s)}\mathbb T^d}
|\eta|\,e^{-\frac{4^s}{\pi^{2s}} |\eta|^{2s}}\,d\eta
\le
C\,t^{-\frac{d+1}{2s}}.
\]
Hence
\begin{equation}
\label{eq:unit-step-bound-final}
\sup_{x\in\mathbb Z^d}|G_t^{(s)}(x+e_j)-G_t^{(s)}(x)|
\le C\,t^{-\frac{d+1}{2s}}.
\end{equation}

Given \(y\in\mathbb Z^d\), choose a nearest-neighbor path
\[
x=x^{(0)},x^{(1)},\dots,x^{(m)}=x-y,
\]
with \(m=\sum_{i=1}^d|y_i|\) and \(x^{(k+1)}-x^{(k)}\in\{\pm e_1,\dots,\pm e_d\}\).
Then
\[
G_t^{(s)}(x-y)-G_t^{(s)}(x)
=
\sum_{k=0}^{m-1}\Bigl(G_t^{(s)}(x^{(k+1)})-G_t^{(s)}(x^{(k)})\Bigr),
\]
so by \eqref{eq:unit-step-bound-final},
\[
|G_t^{(s)}(x-y)-G_t^{(s)}(x)|
\le
m\,C\,t^{-\frac{d+1}{2s}}.
\]
Finally, \(m\le \sqrt d\,|y|\), hence
\[
|G_t^{(s)}(x-y)-G_t^{(s)}(x)|
\le
C\,|y|\,t^{-\frac{d+1}{2s}}.
\]
From Fourier inversion,
\[
|G_t^{(s)}(x)|
\le
\frac1{(2\pi)^{d/2}}\int_{\mathbb T^d}e^{-t\omega(\xi)^s}\,d\xi.
\]
Using \(\omega(\xi)\ge  \frac{4}{\pi^2}|\xi|^2\) on \(\mathbb T^d\), we get
\[
\sup_{x\in\mathbb Z^d}|G_t^{(s)}(x)|
\le C\,t^{-d/(2s)}.
\]
Hence
\[|G_t^{(s)}(x-y)-G_t^{(s)}(x)|
\le 2\sup_z|G_t^{(s)}(z)|
\le C\,t^{-d/(2s)}.\]
From the two estimates,
\[
|G_t^{(s)}(x-y)-G_t^{(s)}(x)|
\le
C\,\min\!\left\{t^{-d/(2s)},\,|y|\,t^{-(d+1)/(2s)}\right\}
=
C\,t^{-d/(2s)}\min\!\left\{1,\ |y|\,t^{-1/(2s)}\right\}.
\]
This is \eqref{eq:linfty-difference-kernel-sharp}.
\end{proof}

The following Sobolev estimate on the torus Fourier side will be used repeatedly to convert \(W^{n,1}\)-regularity of multipliers into summability of their lattice kernels.

\begin{lemma}
\label{lem:Wm1-to-l1}
Let \(n\in\mathbb N\) with \(n>d\), and let \(m\in W^{n,1}(\mathbb T^d)\).
Define 
\[
a(x):=\int_{\mathbb T^d} m(\xi)e^{i x\cdot \xi}\,d\xi,
\qquad x\in\mathbb Z^d.
\]
Then
\[
\|a\|_{\ell^1(\mathbb Z^d)}
\le C_{d,n}\sum_{|\alpha|\le n}\|\partial^\alpha m\|_{L^1(\mathbb T^d)}.
\]
\end{lemma}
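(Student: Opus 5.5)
The plan is to trade smoothness of $m$ on the torus for polynomial decay of its Fourier coefficients $a(x)$ by integrating by parts, and then sum over $\mathbb Z^d$. The point that makes this work is that $e^{ix\cdot\xi}$ is $2\pi$-periodic in each variable for $x\in\mathbb Z^d$. I interpret $W^{n,1}(\mathbb T^d)$ as periodic Sobolev functions, so no boundary terms appear when integrating by parts on the torus.

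\textbf{Step 1 (the origin).} For $x=0$ we have directly $|a(0)|\le\|m\|_{L^1(\mathbb T^d)}$.

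\textbf{Step 2 (integration by parts).} Fix $x\neq0$ and choose a coordinate $j$ with $|x_j|=\|x\|_\infty\ge |x|/\sqrt d$. Integrate by parts $n$ times in $\xi_j$, using $\partial_{\xi_j}e^{ix\cdot\xi}=ix_je^{ix\cdot\xi}$ and periodicity. This gives
\[
a(x)=\frac{1}{(-ix_j)^n}\int_{\mathbb T^d}\partial_{\xi_j}^n m(\xi)\,e^{ix\cdot\xi}\,d\xi .
\]
Hence
\[
|a(x)|\le d^{n/2}|x|^{-n}\sum_{|\alpha|=n}\|\partial^\alpha m\|_{L^1(\mathbb T^d)}.
\]
For functions that are only $W^{n,1}$, I first prove this identity for smooth periodic $m$. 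I then pass to the limit by density: the map $m\mapsto a(x)$ is continuous in $L^1$, and the $W^{n,1}$ seminorms converge along the approximating sequence.

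\textbf{Step 3 (summation).} Combining Steps 1 and 2,
\[
\|a\|_{\ell^1}\le \|m\|_{L^1}+d^{n/2}\Big(\sum_{x\neq0}|x|^{-n}\Big)\sum_{|\alpha|=n}\|\partial^\alpha m\|_{L^1}.
\]
The lattice sum $\sum_{x\ne0}|x|^{-n}$ converges precisely because $n>d$; one way to see this is to compare it with $\int_{|y|\ge1}|y|^{-n}\,dy$. This yields the claimed bound, with $C_{d,n}$ depending only on $d$ and $n$.

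\textbf{Main obstacle.} The only delicate point is justifying the integration by parts. It needs periodicity of $m$ and of its derivatives up to order $n-1$, so that boundary terms at $\xi_j=\pm\pi$ cancel. This is automatic if $W^{n,1}(\mathbb T^d)$ means the periodic Sobolev space. In the later applications the multipliers are built from $e^{-t\omega^s}$ and trigonometric factors, which are periodic, so this reading is the right one. If needed, I would state explicitly that $m$ is understood as a $2\pi$-periodic function on $\mathbb R^d$.
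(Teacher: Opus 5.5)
Your proof is correct and follows essentially the same route as the paper: integrate by parts on the torus to get polynomial decay of $a(x)$, then sum over $\mathbb Z^d$, which converges because $n>d$. The only cosmetic difference is that the paper bounds $(1+\Lambda_n(x))|a(x)|$, with $\Lambda_n(x)=\sum_{|\alpha|\le n}|x^\alpha|$, using all mixed derivatives, whereas you integrate by parts only in the dominant coordinate, so your bound needs just $\|m\|_{L^1}$ and the pure derivatives $\partial_j^n m$; you also make explicit the periodicity and density justification that the paper leaves implicit in ``integration by parts on the torus.''
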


\begin{proof}
Set
\[
\Lambda_n(x):=\sum_{|\alpha|\le n}|x^\alpha|, \qquad x\in\mathbb Z^d.
\]
Since \(n>d\), there exists \(C=C(d,n)\) such that
\begin{equation}
\label{eq:weight-summable}
\sum_{x\in\mathbb Z^d}\frac1{1+\Lambda_n(x)}\le C<\infty .
\end{equation}

For each multi-index \(\alpha\), integration by parts on the torus gives
\[
x^\alpha a(x)
=
i^{|\alpha|}\int_{\mathbb T^d}\partial^\alpha m(\xi)\,e^{i x\cdot \xi}\,d\xi.
\]
Hence
\[
|x^\alpha|\,|a(x)|
\le \|\partial^\alpha m\|_{L^1(\mathbb T^d)}.
\]
Summing over \(|\alpha|\le n\),
\[
\Lambda_n(x)\,|a(x)|
\le \sum_{|\alpha|\le n}\|\partial^\alpha m\|_{L^1(\mathbb T^d)}.
\]
Therefore
\[
(1+\Lambda_n(x))\,|a(x)|
\le 2\sum_{|\alpha|\le n}\|\partial^\alpha m\|_{L^1(\mathbb T^d)},
\]
so
\[
|a(x)|
\le \frac{2}{1+\Lambda_n(x)}
\sum_{|\alpha|\le n}\|\partial^\alpha m\|_{L^1(\mathbb T^d)}.
\]
Now sum in \(x\in\mathbb Z^d\), use \eqref{eq:weight-summable}, and conclude
\[
\|a\|_{\ell^1(\mathbb Z^d)}
\le
C\sum_{|\alpha|\le n}\|\partial^\alpha m\|_{L^1(\mathbb T^d)}.
\]
This proves the lemma.
\end{proof}

The next proposition is the key translation-increment estimate for the lattice heat kernel; it provides the sharp \(\ell^1\)-control that drives the first-order large-time asymptotics.

\begin{proof}[\bf Proof of Proposition \ref{prop:l1-sharp-by-splitting}.]
Fix \(j\in\{1,\dots,d\}\), and define
\[
D_{j,t}(x):=G_t^{(s)}(x+e_j)-G_t^{(s)}(x).
\]
Then
\[
D_{j,t}(x)=\frac1{(2\pi)^{d}}\int_{\mathbb T^d}m_{j,t}(\xi)e^{i\langle x,\xi\rangle}\,d\xi,
\qquad m_{j,t}(\xi)=(e^{i\xi_j}-1)e^{-t\omega(\xi)^s}.
\]
We first prove
\begin{equation}
\label{eq:unit-step-sharp-rig}
\|D_{j,t}\|_{\ell^1(\mathbb Z^d)}\le C\,t^{-1/(2s)},\qquad t\ge1.
\end{equation}

Choose \(\rho\in C_c^\infty(\mathbb R^d)\), \(0\le \rho\le1\), with
\[
\rho(\xi)=1 \ \text{for } |\xi|\le1,\qquad
\rho(\xi)=0 \ \text{for } |\xi|\ge2,
\]
and define
\[
\rho_t(\xi):=\rho\,\big(t^{1/(2s)}\xi\big),\qquad
m_{j,t}^{\mathrm{low}}:=m_{j,t}\rho_t,\qquad
m_{j,t}^{\mathrm{high}}:=m_{j,t}(1-\rho_t).
\]
Let \(D_{j,t}^{\mathrm{low}}\) and  \(D_{j,t}^{\mathrm{high}}\) be the inverse Fourier transforms of
\(m_{j,t}^{\mathrm{low}}\) and \(m_{j,t}^{\mathrm{high}}\), respectively. Then
\[
D_{j,t}=D_{j,t}^{\mathrm{low}}+D_{j,t}^{\mathrm{high}},
\qquad
\|D_{j,t}\|_{\ell^1}\le
\|D_{j,t}^{\mathrm{low}}\|_{\ell^1}+\|D_{j,t}^{\mathrm{high}}\|_{\ell^1}.
\]

\medskip
\noindent
\textbf{(i)  low-frequency part.}
Since
\[
\operatorname{supp} m_{j,t}^{\mathrm{low}}\subset\{|\xi|\le2t^{-1/(2s)}\},
\]
and near \(0\), \(|e^{i\xi_j}-1|\lesssim|\xi_j|\), \(\omega(\xi)\sim |\xi|^2\), we can write
\begin{equation}\label{mjtlow}
    m_{j,t}^{\mathrm{low}}(\xi)
=
t^{-1/(2s)}\widetilde m_t\,\big(t^{1/(2s)}\xi\big),
\end{equation}
where
\[
\widetilde m_t(\eta):=
\Big(\frac{e^{i t^{-1/(2s)}\eta_j}-1}{t^{-1/(2s)}}\Big)
e^{-t\omega(t^{-1/(2s)}\eta)^s}\rho(\eta),
\qquad
\operatorname{supp}\widetilde m_t\subset\{|\eta|\le2\}.
\]
We claim that for each fixed integer $N$ and multi-index \(|\beta|\le N<d+2s+1\), we have
\begin{equation}\label{chja}
\sup_{t\ge1}\|\partial^\beta \widetilde m_t\|_{L^1(\mathbb R^d)}<\infty .
\end{equation}
Write
\[
\widetilde m_t(\eta)=A_t(\eta)\,B_t(\eta)\,\rho(\eta),\qquad
A_t(\eta):=\frac{e^{i\lambda\eta_j}-1}{\lambda},\quad
B_t(\eta):=e^{-t\omega(\lambda\eta)^s},\quad
\lambda=t^{-1/(2s)}\in(0,1].
\]
By Leibniz formula,
\[
\partial^\beta \widetilde m_t
=
\sum_{\beta_1+\beta_2+\beta_3=\beta}
C_{\beta_1,\beta_2,\beta_3}\,
(\partial^{\beta_1}A_t)\,(\partial^{\beta_2}B_t)\,(\partial^{\beta_3}\rho).
\]
Hence
\[
\|\partial^\beta \widetilde m_t\|_{L^1(\mathbb R^d)}
\le
C_\beta\sum_{\beta_1+\beta_2+\beta_3=\beta}
\|\partial^{\beta_1}A_t\|_{L^\infty(B_2)}
\,
\|\partial^{\beta_2}B_t\|_{L^1(B_2)}.
\]
For any multi-index \(\beta=(\beta_1,\dots,\beta_d)\), we have \[ \partial_\eta^\beta A_t(\eta)=0 \quad\text{whenever}\quad \beta_k\neq0\ \text{for some }k\neq j. \] Therefore, it is enough to consider multi-indices of the form $\beta=\beta_j e_j.$ In that case, \[ \partial_{\eta_j}^{\beta_j}A_t(\eta)= \begin{cases} \dfrac{e^{i\lambda\eta_j}-1}{\lambda}, & \beta_j=0,\\[0.8em] i^{\beta_j}\lambda^{\beta_j-1}e^{i\lambda\eta_j}, & \beta_j\ge1. \end{cases} \] Hence \[ \sup_{t\ge1}\sup_{|\eta|\le2}|\partial^\beta A_t(\eta)|\le C_\beta. \]

Set \(\Phi_t(\eta):=t\omega(\lambda\eta)^s\), so \(B_t=e^{-\Phi_t}\).
By Fa\`a di Bruno formula, for each multi-index \(\kappa\),
\[
\partial^\kappa B_t
=
e^{-\Phi_t}
\sum_{q=1}^{|\kappa|}
\ \sum_{\substack{\gamma_1+\cdots+\gamma_q=\kappa\\ |\gamma_\ell|\ge1}}
C_{\kappa,\gamma_1,\dots,\gamma_q}
\prod_{\ell=1}^q \partial^{\gamma_\ell}\Phi_t,
\qquad \kappa\neq0.
\]
Now
\[
\partial^\nu\Phi_t(\eta)
=
t\,\lambda^{|\nu|}\,\partial^\nu(\omega^s)(\lambda\eta).
\]
Using \(\omega(\xi)\sim|\xi|^2\) near \(0\), one has on \(|\xi|\le2\):
\[
|\partial^\nu(\omega^s)(\xi)|\le C_\nu |\xi|^{2s-|\nu|}.
\]
Therefore, for \(|\eta|\le2\)
\[
|\partial^\nu\Phi_t(\eta)|
\le C_\nu\, t\,\lambda^{|\nu|}(\lambda|\eta|)^{2s-|\nu|}
= C_\nu\,|\eta|^{2s-|\nu|},
\]
since \(t\lambda^{2s}=1\). Hence each term in \(\partial^\kappa B_t\) is bounded by
\[
\prod_{\ell=1}^q |\eta|^{2s-|\gamma_\ell|}
=|\eta|^{2sq-|\kappa|}.
\]
Now return to
\[
\partial^\beta \widetilde m_t
=
\sum_{\beta_1+\beta_2+\beta_3=\beta}
C_{\beta_1,\beta_2,\beta_3}\,
(\partial^{\beta_1}A_t)(\partial^{\beta_2}B_t)(\partial^{\beta_3}\rho).
\]
The worst singular term is \((\beta_1,\beta_2,\beta_3)=(0,\beta,0)\):
\[
|A_t(\eta)\,\partial^\beta B_t(\eta)|
\lesssim |\eta|\cdot |\eta|^{2s-|\beta|}
=|\eta|^{2s+1-|\beta|}.
\]
Hence this term is \(L^1(B_2)\) provided
\[
2s+1-|\beta|>-d
\quad\Longleftrightarrow\quad
|\beta|<d+2s+1.
\]
All other Leibniz terms are less singular. Consequently, for every fixed integer
\(N<d+2s+1\),
\[
\sup_{t\ge1}\sum_{|\beta|\le N}\|\partial^\beta \widetilde m_t\|_{L^1(\mathbb R^d)}<\infty,
\]
which is exactly \eqref{chja}.

Let
\begin{equation}\label{kty}
    K_t(y):
=\frac1{(2\pi)^{d}}\int_{\mathbb R^d}\widetilde m_t(\eta)e^{iy\cdot\eta}\,d\eta.
\end{equation}
Hence, by integration by parts, for \(|\beta|=N\),
\[
y^\beta K_t(y)
= c_\beta\int_{\mathbb R^d}\partial^\beta\widetilde m_t(\eta)e^{iy\cdot\eta}\,d\eta.
\]
Summing over \(|\beta|=N\), we get
\begin{equation}\label{esy1}
    |K_t(y)|\le C_N(1+|y|)^{-N},
\end{equation}
with \(C_N\) independent of \(t\ge1\).

Set \(\eta=t^{1/2s}\xi\), by (\ref{mjtlow}), (\ref{kty}) and $\operatorname{supp}\widetilde m_t\subset\{|\eta|\le2\},$
\[
D_{j,t}^{\mathrm{low}}(x)
= \frac1{(2\pi)^{d}}\,t^{-1/(2s)}t^{-d/2s}
\int_{\mathbb R^d}\widetilde m_t(\eta)e^{i(x\cdot\eta)t^{-1/2s}}\,d\eta
= t^{-(d+1)/(2s)}K_t\,\Big(t^{-1/2s}x\Big).
\]
Therefore, by \eqref{esy1}
\[
|D_{j,t}^{\mathrm{low}}(x)|
\le C_N\,t^{-(d+1)/(2s)}
\Big(1+t^{-1/(2s)}|x|\Big)^{-N},\quad t\ge 1
\]
Choose \(N=d+1\). Using the fact
\[
\sum_{x\in\mathbb Z^d}(1+a|x|)^{-N}\le C_{d,N}a^{-d},\qquad 0<a\le1,
\]
with \(a=t^{-1/(2s)}\), we obtain
\[
\|D_{j,t}^{\mathrm{low}}\|_{\ell^1}
\le
C\,t^{-(d+1)/(2s)}
\sum_{x\in\mathbb Z^d}\Big(1+t^{-1/(2s)}|x|\Big)^{-N}
\le C\,t^{-1/(2s)}.
\]

\medskip
\noindent
\textbf{(ii)  high-frequency part.}

To avoid derivative loss coming from the \(t\)-dependent cutoff \(\rho_t\), we decompose
\[
m_{j,t}^{\mathrm{high}}
=(e^{i\xi_j}-1)e^{-t\omega(\xi)^s}(1-\rho_t(\xi))
\]
as
\[
m_{j,t}^{\mathrm{high}}
=
(e^{i\xi_j}-1)e^{-t\omega(\xi)^s}(1-\chi(\xi))
+
(e^{i\xi_j}-1)e^{-t\omega(\xi)^s}\big(\chi(\xi)-\rho_t(\xi)\big):=m_{j,t}^{(1)}+m_{j,t}^{(2)},
\]
where
\[
\chi(\xi):=\rho(\xi/\delta),\qquad\delta>1\quad\text{fixed}.
\]
Then
\[
\chi(\xi)=1 \ \text{for } |\xi|\le \delta,\qquad
\chi(\xi)=0 \ \text{for } |\xi|\ge 2\delta.
\]

On \(\operatorname{supp}(1-\chi)\), \(|\xi|\ge\delta\), hence
\[
\omega(\xi)^s\ge \mu_\delta>0.
\]
By Leibniz formula and Fa\`a di Bruno formula, for \(|\alpha|\le M\), it's easy to obtain
\[
\|\partial^\alpha m_{j,t}^{(1)}\|_{L^1(\mathbb T^d)}
\le C_{\alpha,M,\delta}\,e^{-c_\delta t}.
\]
Therefore, by Lemma \ref{lem:Wm1-to-l1}
\[
\|\mathcal F^{-1}m_{j,t}^{(1)}\|_{\ell^1(\mathbb Z^d)}\le C e^{-c_\delta t}
\le C t^{-1/(2s)}.
\]

Next, we consider the estimate of \(m_{j,t}^{(2)}\).
Its support is contained in
\[
\{\,t^{-1/(2s)}\le |\xi|\le 2\delta\,\}.
\]
Note that \(m_{j,t}^{(2)}\) has the same kernel scaling as the low-frequency piece, thus, for every \(N\),
\[
\big|\mathcal F^{-1}m_{j,t}^{(2)}(x)\big|
\le C_N\,t^{-(d+1)/(2s)}\Big(1+t^{-1/(2s)}|x|\Big)^{-N},
\]
we obtain
\[
\|\mathcal F^{-1}m_{j,t}^{(2)}\|_{\ell^1}\le C t^{-1/(2s)}.
\]
Combining both parts,
\[
\|D_{j,t}^{\mathrm{high}}\|_{\ell^1}
\le C
\|\mathcal F^{-1}m_{j,t}^{(1)}\|_{\ell^1}
+C\|\mathcal F^{-1}m_{j,t}^{(2)}\|_{\ell^1}
\le C t^{-1/(2s)}.
\]
Therefore,
\[
\|D_{j,t}\|_{\ell^1}
\le
\|D_{j,t}^{\mathrm{low}}\|_{\ell^1}
+\|D_{j,t}^{\mathrm{high}}\|_{\ell^1}
\le C t^{-1/(2s)},\qquad t\ge1,
\]
which proves \eqref{eq:unit-step-sharp-rig}.

\medskip
Now let \(y\in\mathbb Z^d\). Choose a nearest-neighbor path
\[
0=z^{(0)},z^{(1)},\dots,z^{(m)}=y,\qquad
m=\sum_{i=1}^d|y_i|,\quad z^{(k+1)}-z^{(k)}\in\{\pm e_1,\dots,\pm e_d\}.
\]
By telescoping,
\[
G_t^{(s)}(x-y)-G_t^{(s)}(x)
=
\sum_{k=0}^{m-1}\Big(G_t^{(s)}(x-z^{(k+1)})-G_t^{(s)}(x-z^{(k)})\Big).
\]
Taking \(\ell^1\)-norm in \(x\), and using translation invariance,
\[
\sum_x |G_t^{(s)}(x-y)-G_t^{(s)}(x)|
\le
\sum_{k=0}^{m-1}\|G_t^{(s)}(\cdot-\delta_k)-G_t^{(s)}\|_{\ell^1}
\le
C\,m\,t^{-1/(2s)}
\le
C\,|y|\,t^{-1/(2s)},\quad t\ge 1.
\]
Also, since \(G_t^{(s)}\ge0\) and \(\sum_x G_t^{(s)}(x)=1\),
\[
\sum_x |G_t^{(s)}(x-y)-G_t^{(s)}(x)|
\le
\sum_x G_t^{(s)}(x-y)+\sum_x G_t^{(s)}(x)=2.
\]
Hence
\[
\sum_x |G_t^{(s)}(x-y)-G_t^{(s)}(x)|
\le
C\min\{1,\ |y|t^{-1/(2s)}\},
\]
which is \eqref{eq:l1-sharp-main}.
\end{proof}

\subsubsection{Sharpness of the Convergence with Finite First Moment}
We now show that, within the class of initial data with finite first moment, the decay rate obtained above is optimal and cannot be improved in general.

	\begin{proof}[\bf Proof of Theorem \ref{thm:lattice-first-order}.]
Since
\[
u(t,x)=\sum_{z\in\mathbb Z^d}G_t^{(s)}(x-z)\,u_0(z),\qquad
M=\sum_{z\in\mathbb Z^d}u_0(z),
\]
then
\[
u(t,x)-M G_t^{(s)}(x)
=
\sum_{z\in\mathbb Z^d}u_0(z)\Big(G_t^{(s)}(x-z)-G_t^{(s)}(x)\Big).
\]
Hence, by triangle inequality,
\begin{equation}\label{eq:remainder-repr}
|u(t,x)-M G_t^{(s)}(x)|
\le
\sum_{z\in\mathbb Z^d}|u_0(z)|\,\big|G_t^{(s)}(x-z)-G_t^{(s)}(x)\big|.
\end{equation}

\noindent
\textbf{(i) \(\ell^\infty\)-estimate.}
Taking supremum in \(x\) in \eqref{eq:remainder-repr} and using Proposition~\ref{prop:kernel-increment-pointwise},
\[
\|u(t)-M G_t^{(s)}\|_{\ell^\infty}
\le
\sum_{z}|u_0(z)|\,
\sup_x |G_t^{(s)}(x-z)-G_t^{(s)}(x)|
\]
\[
\le
C\,t^{-d/(2s)}\sum_{z}|u_0(z)|
\min\!\left\{1,\ |z|\,t^{-1/(2s)}\right\}\le C\,\mathcal N_1(u_0)\,t^{-(d+1)/(2s)}.
\]
Multiplying by \(t^{d/(2s)}\), we obtain
\[
t^{d/(2s)}\|u(t)-M G_t^{(s)}\|_{\ell^\infty}
\le
C\,\mathcal N_1(u_0)\,t^{-1/(2s)},
\]
which is \eqref{eq:lattice-Linfty-rate}.

\medskip
\noindent
\textbf{(ii) \(\ell^1\)-estimate.}
Sum \eqref{eq:remainder-repr} over \(x\in\mathbb Z^d\), then apply Fubini theorem:
\[
\|u(t)-M G_t^{(s)}\|_{\ell^1}
\le
\sum_{z}|u_0(z)|
\sum_{x}\big|G_t^{(s)}(x-z)-G_t^{(s)}(x)\big|.
\]
Using Proposition~\ref{prop:l1-sharp-by-splitting},
\[
\|u(t)-M G_t^{(s)}\|_{\ell^1}
\le
C\sum_{z}|u_0(z)|\min\!\left\{1,\ |z|\,t^{-1/(2s)}\right\}
\le
C\,t^{-1/(2s)}\sum_{z}|z|\,|u_0(z)|.
\]
Therefore
\[
\|u(t)-M G_t^{(s)}\|_{\ell^1}
\le
C\,\mathcal N_1(u_0)\,t^{-1/(2s)},
\]
which is \eqref{eq:lattice-L1-rate}.

\medskip
\noindent
\textbf{(iii) \(\ell^p\)-estimate for \(1\le p\le\infty\).}
Let
\[
R(t):=u(t)-M G_t^{(s)}.
\]
By discrete interpolation theorem,
\[
\|R(t)\|_{\ell^p}
\le
\|R(t)\|_{\ell^1}^{1/p}\,
\|R(t)\|_{\ell^\infty}^{1-1/p}.
\]
Insert the bounds from Steps 1--2:
\[
\|R(t)\|_{\ell^p}
\le
C\,\mathcal N_1(u_0)\,
t^{-\frac1{2s}\frac1p}\,
t^{-\frac{d+1}{2s}\left(1-\frac1p\right)}
=
C\,\mathcal N_1(u_0)\,
t^{-\frac{d}{2s}\left(1-\frac1p\right)}\,t^{-1/(2s)}.
\]
Hence
\[
t^{\frac d{2s}\left(1-\frac1p\right)}
\|u(t)-M G_t^{(s)}\|_{\ell^p}
\le
C\,\mathcal N_1(u_0)\,t^{-1/(2s)},
\]
which is \eqref{eq:lattice-Lp-rate}. This completes the proof.
\end{proof}

Next, we show that the lattice kernel  satisfies a limit scaling toward the stable profile in self-similar variables.

\begin{proof}[\bf Proof of Proposition \ref{lem:lattice-stable-scaling}.]
Fix \(R>0\), and set $\lambda=t^{-1/(2s)}.$ By the change of variables \(\xi=\lambda\zeta\),
\[
t^{d/(2s)}G_t^{(s)}\!\big(x_t(\eta)\big)
=
\frac1{(2\pi)^d}
\int_{[-\pi/\lambda,\pi/\lambda]^d}
e^{-t\omega(\lambda\zeta)^s}
e^{i\lambda x_t(\eta)\cdot\zeta}\,d\zeta
=:I_t(\eta).
\]
We prove \(\sup_{|\eta|\le R}|I_t(\eta)-\Phi_s(\eta)|\to0\) as $t\rightarrow \infty.$

\medskip
\noindent
\textbf{(i) pointwise convergence of the integrand.}
Fix \(\zeta\in\mathbb R^d\). Since \(\omega(\xi)=|\xi|^2+O(|\xi|^4)\) as \(\xi\to0\),
\[
t\,\omega(\lambda\zeta)^s
=
\lambda^{-2s}\,\omega(\lambda\zeta)^s
\longrightarrow |\zeta|^{2s}\quad \text{as}\quad t\rightarrow \infty.
\]
From \eqref{eq:nearest-lattice-selection},
\[
\lambda x_t(\eta)-\eta
=
\lambda\big(x_t(\eta)-t^{1/(2s)}\eta\big),
\]
hence
\[
\sup_{|\eta|\le R}|\lambda x_t(\eta)-\eta|
\le C_0\lambda\to0\quad \text{as}\quad t\rightarrow \infty.
\]
Therefore, for each fixed \(\zeta\),
\[
e^{-t\omega(\lambda\zeta)^s}e^{i\lambda x_t(\eta)\cdot\zeta}
\to
e^{-|\zeta|^{2s}}e^{i\eta\cdot\zeta},
\]
uniformly in \(|\eta|\le R\). 

\medskip
\noindent
\textbf{(ii) uniform tail control.}
Because \(\omega(\theta)\ge \frac{4}{\pi^2}|\theta|^2\) on \([-\pi,\pi]^d\), we have
\[
t\omega(\lambda\zeta)^s\ge (\frac{4}{\pi^2})^s|\zeta|^{2s}
\quad\text{whenever } |\lambda\zeta_j|\le \pi \text{ for all }j,
\]
thus
\[
\left|e^{-t\omega(\lambda\zeta)^s}e^{i\lambda x_t(\eta)\cdot\zeta}\right|
\le e^{-\frac{4}{\pi^2}|\zeta|^{2s}}.
\]
Hence, for any \(A>0\),
\[
\sup_{t\ge1}\sup_{|\eta|\le R}
\int_{\{|\zeta|>A\}\cap[-\pi/\lambda,\pi/\lambda]^d}
\left|e^{-t\omega(\lambda\zeta)^s}e^{i\lambda x_t(\eta)\cdot\zeta}\right|\,d\zeta
\le
\int_{|\zeta|>A}e^{-\frac{4}{\pi^2}|\zeta|^{2s}}\,d\zeta.
\]
Also
\[
\sup_{|\eta|\le R}\int_{|\zeta|>A}
e^{-|\zeta|^{2s}}\,d\zeta
=
\int_{|\zeta|>A}e^{-|\zeta|^{2s}}\,d\zeta.
\]
Both right-hand sides tend to \(0\) as \(A\to\infty\).

\medskip
\noindent
\textbf{(iii) convergence on bounded \(\zeta\)-balls.}
Fix \(A>0\). For \(t\) large, \(B_A\subset[-\pi/\lambda,\pi/\lambda]^d\). On \(B_A\), by Step 1 we have uniform  pointwise convergence, and by Step 2 an \(L^1(B_A)\)-dominating function \(e^{-\frac{4}{\pi^2}|\zeta|^{2s}}\). Hence dominated convergence yields
\[
\sup_{|\eta|\le R}
\left|
\int_{|\zeta|\le A}
\Big(
e^{-t\omega(\lambda\zeta)^s}e^{i\lambda x_t(\eta)\cdot\zeta}
-
e^{-|\zeta|^{2s}}e^{i\eta\cdot\zeta}
\Big)\,d\zeta
\right|
\longrightarrow0.
\]
Split \(I_t-\Phi_s(\eta)\) into \(|\zeta|\le A\) and \(|\zeta|>A\). First let \(t\to\infty\) (Step 3), then \(A\to\infty\) (Step 2). We obtain
\[
\sup_{|\eta|\le R}|I_t(\eta)-\Phi_s(\eta)|\to0.
\]
This is exactly \eqref{eq:lclt-uniform-compact}.
\end{proof}

We now prove that the first-order decay exponent obtained above cannot be improved in general, by testing on a single-lattice-shift datum.

\begin{proof}[\bf Proof of Proposition \ref{prop:optimality-first-order}.]
Set
\[
H_t(x):=G_t^{(s)}(x-e_1)-G_t^{(s)}(x).
\]
We only need a lower bound of order
\(
t^{-\frac d{2s}(1-\frac1p)}t^{-1/(2s)}.
\)

\smallskip
Let \(\Phi_s\) be the Euclidean \(2s\)-stable profile:
\[
\Phi_s(\eta)=\frac1{(2\pi)^d}\int_{\mathbb R^d}e^{-|\zeta|^{2s}}e^{i\eta\cdot\zeta}\,d\zeta.
\]
By Proposition~\ref{lem:lattice-stable-scaling}, for any \(R>0\), any lattice selection
\(x_t(\eta)\in\mathbb Z^d\) with
\(
\sup_\eta |x_t(\eta)-t^{1/(2s)}\eta|\le C_0
\),
we have
\[
\sup_{|\eta|\le R}
\left|
t^{d/(2s)}G_t^{(s)}\!\big(x_t(\eta)\big)-\Phi_s(\eta)
\right|\to0.
\]
Applying the same lemma to the shifted index \(x_t(\eta)-e_1\), we also get
\[
\sup_{|\eta|\le R}
\left|
t^{d/(2s)}G_t^{(s)}\!\big(x_t(\eta)-e_1\big)-\Phi_s(\eta)
\right|\to0,
\]
because
\[
\sup_{|\eta|\le R}
\left|
\big(x_t(\eta)-e_1\big)-t^{1/(2s)}
\Big(\eta-t^{-1/(2s)}e_1\Big)
\right|
\le C_0.
\]
Therefore,
\[
\sup_{|\eta|\le R}
\left|
t^{d/(2s)}H_t\!\big(x_t(\eta)\big)
-
\Big[
\Phi_s\!\big(\eta-t^{-1/(2s)}e_1\big)-\Phi_s(\eta)
\Big]
\right|\to0.
\]
Set \(h:=t^{-1/(2s)}\). Since \(\Phi_s\in C^2(\mathbb R^d)\),  for fixed \(R>0\),
\[
\sup_{|\eta|\le R}
\left|
\Phi_s(\eta-h e_1)-\Phi_s(\eta)+h\,\partial_1\Phi_s(\eta)
\right|
\le
\frac{h^2}{2}\,
\sup_{|\zeta|\le R+1}|\partial_{11}\Phi_s(\zeta)|
=O(h^2)=o(h),
\]
as \(h\to0\). Equivalently,
\[
\sup_{|\eta|\le R}
\left|
\frac{\Phi_s(\eta-h e_1)-\Phi_s(\eta)}{h}
+\partial_1\Phi_s(\eta)
\right|
\to0.
\]
Hence, we get
\[
\Phi_s\!\big(\eta-t^{-1/(2s)}e_1\big)-\Phi_s(\eta)
=
-\,t^{-1/(2s)}\partial_1\Phi_s(\eta)+o\,\big(t^{-1/(2s)}\big)
\]
uniformly for \(|\eta|\le R\). Hence
\begin{equation}
\label{eq:Ht-local-asymptotic}
\sup_{|\eta|\le R}
\left|
t^{\frac{d+1}{2s}}H_t\!\big(x_t(\eta)\big)+\partial_1\Phi_s(\eta)
\right|
\to0.
\end{equation}

\smallskip
Choose \(R>0\) so that \(\partial_1\Phi_s\not\equiv0\) on \(B_R\). Then
\[
A_{p,R}:=
\begin{cases}
\left(\displaystyle\int_{B_R}|\partial_1\Phi_s(\eta)|^p\,d\eta\right)^{1/p}>0,
&1\le p<\infty,\\[1ex]
\displaystyle\sup_{|\eta|\le R}|\partial_1\Phi_s(\eta)|>0,
&p=\infty.
\end{cases}
\]

\noindent
\textbf{(i) \(1\le p<\infty\).}
From \eqref{eq:Ht-local-asymptotic}, for \(t\) large,
\[
\left(\sum_{x_t(\eta)\in \Lambda_{t,R}}
\Big|t^{\frac{d+1}{2s}}H_t(x_t(\eta))\Big|^p\right)^{1/p}
\ge \frac12
\left(\sum_{x_t(\eta)\in \Lambda_{t,R}}
|\partial_1\Phi_s(\eta_x)|^p\right)^{1/p},
\]
where \(\Lambda_{t,R}:=\{x\in\mathbb Z^d:\ |x|\le Rt^{1/(2s)}\}\), and \(\eta_x:=t^{-1/(2s)}x\).
By Riemann-sum convergence,
\[
t^{-d/(2s)}\sum_{|x|\le Rt^{1/(2s)}}|\partial_1\Phi_s(t^{-1/(2s)}x)|^p
\to \int_{B_R}|\partial_1\Phi_s(\eta)|^p\,d\eta.
\]
Thus, for \(t\) large,
\[
\sum_{|x|\le Rt^{1/(2s)}}|H_t(x)|^p
\ge c\,t^{-p\frac{d+1}{2s}}\,t^{d/(2s)},
\]
and hence
\[
\|H_t\|_{\ell^p}
\ge c\,t^{-\frac{d+1}{2s}}\,t^{\frac d{2sp}}
=
c\,t^{-\frac d{2s}(1-\frac1p)}\,t^{-1/(2s)}.
\]

\noindent
\textbf{(ii) \(p=\infty\).}
From \eqref{eq:Ht-local-asymptotic},
\[
\sup_{|x|\le Rt^{1/(2s)}} t^{\frac{d+1}{2s}}|H_t(x)|
\to \sup_{|\eta|\le R}|\partial_1\Phi_s(\eta)|=A_{\infty,R}>0,
\]
so for \(t\) large,
\[
\|H_t\|_{\ell^\infty}\ge c\,t^{-(d+1)/(2s)}
= c\,t^{-d/(2s)}t^{-1/(2s)}.
\]

Combining both cases gives
\[
\|H_t\|_{\ell^p}
\ge c\,t^{-\frac d{2s}(1-\frac1p)}\,t^{-1/(2s)}
\]
for all \(t\ge t_0\). Multiplying by \(t^{\frac d{2s}(1-1/p)}\) finishes the proof.
\end{proof}

\subsubsection{Asymptotics under General \texorpdfstring{$\ell^1$}{l1} Class}
We next turn to initial data in the full \(\ell^1(\mathbb Z^d)\) class (without finite first moment), and establish qualitative large-time asymptotics with no universal quantitative convergence rate.

\begin{proof}[\bf Proof of Theorem \ref{thm:lattice-asymptotic-no-moment}.]
We use approximation plus the quantitative estimate in
Theorem~\ref{thm:lattice-first-order}.

\medskip
\noindent
\textbf{(i) \(\ell^1\)-convergence.}
Fix \(\varepsilon>0\).
Choose \(v_0\in \ell^1(\mathbb Z^d)\) with finite support such that
\begin{equation}
\label{eq:approx-u0-v0}
\|u_0-v_0\|_{\ell^1}<\varepsilon.
\end{equation}
Set
\[
v(t):=G_t^{(s)}*v_0,\qquad
M_v:=\sum_{x\in\mathbb Z^d}v_0(x).
\]
By \(\ell^1\)-contraction of the semigroup,
\begin{equation}
\label{eq:l1-contraction-u-v}
\|u(t)-v(t)\|_{\ell^1}
=
\|G_t^{(s)}*(u_0-v_0)\|_{\ell^1}
\le \|u_0-v_0\|_{\ell^1}
<\varepsilon,
\quad\forall\, t>0.
\end{equation}
Also,
\begin{equation}
\label{eq:mass-diff}
|M-M_v|
\le \|u_0-v_0\|_{\ell^1}
<\varepsilon.
\end{equation}
Hence
\[
\|u(t)-M G_t^{(s)}\|_{\ell^1}
\le
\|u(t)-v(t)\|_{\ell^1}
+\|v(t)-M_v G_t^{(s)}\|_{\ell^1}
+|M_v-M|\,\|G_t^{(s)}\|_{\ell^1}.
\]
Using \eqref{eq:l1-contraction-u-v}, \eqref{eq:mass-diff}, and (\ref{mass}),
\begin{equation}
\label{eq:l1-three-term}
\|u(t)-M G_t^{(s)}\|_{\ell^1}
\le
2\varepsilon+\|v(t)-M_v G_t^{(s)}\|_{\ell^1}.
\end{equation}
Since \(v_0\) has finite first moment, Theorem~\ref{thm:lattice-first-order} gives
\[
\|v(t)-M_v G_t^{(s)}\|_{\ell^1}
\le C\,\mathcal N_1(v_0)\,t^{-1/(2s)}\xrightarrow[]{t\to\infty}0.
\]
Taking \(\limsup_{t\to\infty}\) in \eqref{eq:l1-three-term},
\[
\limsup_{t\to\infty}\|u(t)-M G_t^{(s)}\|_{\ell^1}
\le 2\varepsilon.
\]
Since \(\varepsilon>0\) is arbitrary, \eqref{eq:lattice-L1-conv-no-moment} follows.

\medskip
\noindent
\textbf{(ii) \(\ell^\infty\)-convergence at the sharp scaling.}
Let
\[
R(t):=u(t)-M G_t^{(s)}.
\]
Using the semigroup property \(G_t^{(s)}=G_{t/2}^{(s)}*G_{t/2}^{(s)}\),
\[
R(t)=G_{t/2}^{(s)}*R(t/2).
\]
Therefore,
\[
\|R(t)\|_{\ell^\infty}
\le
\|G_{t/2}^{(s)}\|_{\ell^\infty}\,\|R(t/2)\|_{\ell^1}.
\]
Multiplying by \(t^{d/(2s)}\),
\begin{equation}
\label{eq:linfty-from-l1}
t^{\frac d{2s}}\|R(t)\|_{\ell^\infty}
\le
\Big(t^{\frac d{2s}}\|G_{t/2}^{(s)}\|_{\ell^\infty}\Big)\,
\|R(t/2)\|_{\ell^1}.
\end{equation}
By the kernel bound \(\|G_\tau^{(s)}\|_{\ell^\infty}\le C\,\tau^{-d/(2s)}\),
\[
t^{\frac d{2s}}\|G_{t/2}^{(s)}\|_{\ell^\infty}
\le C_{d,s}.
\]
Hence from \eqref{eq:linfty-from-l1},
\[
t^{\frac d{2s}}\|R(t)\|_{\ell^\infty}
\le C\,\|R(t/2)\|_{\ell^1}\xrightarrow[]{t\to\infty}0
\]
by \eqref{eq:lattice-L1-conv-no-moment}. This proves
\eqref{eq:lattice-Linfty-conv-no-moment}.

\medskip
\noindent
\textbf{(iii) \(\ell^p\)-convergence.}
For \(1\le p\le\infty\), interpolation gives
\[
\|R(t)\|_{\ell^p}
\le
\|R(t)\|_{\ell^1}^{1/p}\,
\|R(t)\|_{\ell^\infty}^{1-1/p}.
\]
Multiply by \(t^{\frac d{2s}(1-1/p)}\):
\[
t^{\frac d{2s}(1-1/p)}\|R(t)\|_{\ell^p}
\le
\|R(t)\|_{\ell^1}^{1/p}\,
\Big(t^{\frac d{2s}}\|R(t)\|_{\ell^\infty}\Big)^{1-1/p}.
\]
Both factors on the right tend to \(0\) by
\eqref{eq:lattice-L1-conv-no-moment} and \eqref{eq:lattice-Linfty-conv-no-moment},
hence \eqref{eq:lattice-Lp-conv-no-moment} follows.
\end{proof}

Here and in what follows, for \(z=(z_1,\dots,z_d)\in\mathbb R^d\), we denote
\[
\lfloor z\rfloor := (\lfloor z_1\rfloor,\dots,\lfloor z_d\rfloor)\in\mathbb Z^d.
\]
In particular,
\[
\big|\lfloor z\rfloor-z\big|\le \sqrt d .
\]

The next proposition shows that, in the mere \(\ell^1\)-framework, one cannot expect a uniform quantitative decay rate after the natural scaling \(t^{d/(2s)}\).  
More precisely, no prescribed vanishing profile \(\phi(t)\downarrow0\) can serve as a universal upper bound for
\(\,t^{d/(2s)}\|u(t)-M G_t^{(s)}\|_{\ell^\infty}\,\) over all unit-mass nonnegative data in \(\ell^1\).  
Equivalently, by a suitable choice of \(u_0\), the convergence to the asymptotic kernel can be made arbitrarily slow along a subsequence of times.

\begin{proof}[\bf Proof of Proposition \ref{prop:no-universal-rate-lattice}.]
Let \(\delta\in(0,1)\), and choose a summable positive sequence \((m_k)_{k\ge1}\) with
\[
\sum_{k=1}^\infty m_k=\delta.
\]
We construct points \(x_k\in\mathbb Z^d\), \(|x_k|\to\infty\), and times \(t_k\to\infty\) inductively, then set
\[
u_0:=(1-\delta)\delta_0+\sum_{k=1}^\infty m_k\,\delta_{-x_k}.
\]
Clearly \(u_0\in\ell^1\), \(u_0\ge0\), and \(\sum_x u_0(x)=1\).

Let
\[
u(t,0)=\sum_{y\in\mathbb Z^d}u_0(y)\,G_t^{(s)}(-y)
=(1-\delta)G_t^{(s)}(0)+\sum_{k=1}^\infty m_k\,G_t^{(s)}(x_k).
\]
Hence
\[
u(t,0)-G_t^{(s)}(0)
=
\sum_{k=1}^\infty m_k\bigl(G_t^{(s)}(x_k)-G_t^{(s)}(0)\bigr)
=
-\sum_{k=1}^\infty m_k\bigl(G_t^{(s)}(0)-G_t^{(s)}(x_k)\bigr).\]
Therefore
\begin{equation}\label{eq:error-at-origin}
    t^{\frac d{2s}}
\|u(t)-G_t^{(s)}\|_{\ell^\infty}
\ge
t^{\frac d{2s}}|u(t,0)-G_t^{(s)}(0)|
=
\sum_{k=1}^\infty m_k\,
\Bigl[t^{\frac d{2s}}\bigl(G_t^{(s)}(0)-G_t^{(s)}(x_k)\bigr)\Bigr].
\end{equation}

\medskip
\noindent
\textbf{(i) a fixed positive gap for one lump.}
By Proposition~\ref{lem:lattice-stable-scaling},
\[
t^{\frac d{2s}}G_t^{(s)}(0)\to \Phi_s(0)>0.
\]
By the Riemann--Lebesgue Lemma
\[
\Phi_s(\eta)\to0\quad\text{as }|\eta|\to\infty,
\]
we have \(\Phi_s\in C_0(\mathbb R^d)\). Also
\[
\Phi_s(0)=\frac1{(2\pi)^{d/2}}\int_{\mathbb R^d}e^{-|\xi|^{2s}}\,d\xi>0.
\]
Therefore, with \(\varepsilon:=\frac14\Phi_s(0)\), there exists \(R>0\) such that
\[
|\eta|\ge R\ \Longrightarrow\ \Phi_s(\eta)\le \varepsilon.
\]
Taking \(\rho_*:=R\), we get
\[
\Phi_s(\rho_*e_1)\le \frac14\Phi_s(0).
\]

Again by Lemma~\ref{lem:lattice-stable-scaling}, uniformly for \(|\eta|\le 2\rho_*\),
\[
t^{\frac d{2s}}G_t^{(s)}\!\big(\lfloor t^{1/(2s)}\eta\rfloor\big)\to \Phi_s(\eta).
\]
Hence there exists \(T_*\ge1\) such that for all \(t\ge T_*\):
\[
t^{\frac d{2s}}G_t^{(s)}(0)\ge \frac34\Phi_s(0),
\qquad
t^{\frac d{2s}}G_t^{(s)}\!\big(\lfloor t^{1/(2s)}\rho_* e_1\rfloor\big)\le \frac13\Phi_s(0).
\]
Therefore, for all \(t\ge T_*\),
\begin{equation}
\label{eq:uniform-gap}
t^{\frac d{2s}}
\Bigl(G_t^{(s)}(0)-G_t^{(s)}\!\big(\lfloor t^{1/(2s)}\rho_* e_1\rfloor\big)\Bigr)
\ge c_*,
\quad
c_*:=\frac{5}{12}\Phi_s(0)>0.
\end{equation}

\medskip
\noindent
\textbf{(ii) inductive choice of \(t_k,x_k\).}
Assume \(t_1,\dots,t_{k-1}\) and \(x_1,\dots,x_{k-1}\) are chosen.
Pick \(t_k\ge \max\{T_*,t_{k-1}+1\}\) so large that
\begin{equation}
\label{eq:choose-tk}
k\,\phi(t_k)\le \frac{c_*}{2}\,m_k.
\end{equation}
Now set
\[
x_k:=\left\lfloor t_k^{1/(2s)}\rho_* e_1\right\rfloor\in\mathbb Z^d.
\]
Then \(|x_k|\to\infty\), and by \eqref{eq:uniform-gap},
\[
t_k^{\frac d{2s}}
\bigl(G_{t_k}^{(s)}(0)-G_{t_k}^{(s)}(x_k)\bigr)\ge c_*.
\]
Hence from \eqref{eq:error-at-origin},
\[
t_k^{\frac d{2s}}
\|u(t_k)-G_{t_k}^{(s)}\|_{\ell^\infty}
\ge
m_k\,c_*
\ge
k\,\phi(t_k)
\]
by \eqref{eq:choose-tk}. This is exactly \eqref{eq:no-universal-rate-lattice}.
\end{proof}

\begin{remark}
\label{rem:conflict-no-rate-vs-first-moment}
In Proposition~\ref{prop:no-universal-rate-lattice}, the data are constructed as
\[
u_0=(1-\delta)\delta_0+\sum_{k\ge1}m_k\,\delta_{-x_k},
\qquad
x_k=\Big\lfloor t_k^{1/(2s)}\rho_*e_1\Big\rfloor .
\]
Hence
\[
\mathcal N_1(u_0):=\sum_{x\in\mathbb Z^d}|x|u_0(x)
=\sum_{k\ge1}m_k|x_k|
\sim
\sum_{k\ge1} m_k\,t_k^{1/(2s)}.
\]
On the other hand, to force
\[
t_k^{\frac d{2s}}
\|u(t_k)-G_{t_k}^{(s)}\|_{\ell^\infty}
\ge k\,\phi(t_k),
\]
the construction imposes
\[
m_k\gtrsim k\,\phi(t_k).
\]
Therefore
\[
m_k|x_k|
\gtrsim
k\,\phi(t_k)\,t_k^{1/(2s)}.
\]
If \(\phi\) is not faster than \(t^{-1/(2s)}\) (e.g. \(\phi(t)=t^{-1/(2s)}\), or slower),
the right-hand side is typically non-summable, so \(\sum_k m_k|x_k|=\infty\).  
This explains why the above ``no-universal-rate'' mechanism generally leaves the class
\(\{\mathcal N_1(u_0)<\infty\}\).

This is fully consistent with Theorem~\ref{thm:lattice-first-order}: if
\(\mathcal N_1(u_0)<\infty\), then
\[
t^{\frac d{2s}}
\|u(t)-M G_t^{(s)}\|_{\ell^\infty}
\le C(u_0)\,t^{-1/(2s)}.
\]
Hence one cannot have, for every \(\phi(t)\to0\), a lower bound of the form
\[
t_k^{\frac d{2s}}
\|u(t_k)-G_{t_k}^{(s)}\|_{\ell^\infty}
\ge k\,\phi(t_k)
\]
within the finite-first-moment class.
\end{remark}

The next corollary shows that the first-moment assumption in Theorem~\ref{thm:lattice-first-order}
is not only sufficient for a quantitative rate, but also essential for obtaining any uniform
rate on the whole \(\ell^1\) class with fixed mass.

\begin{proof}[\bf Proof of Corollary \ref{thm:lattice-asymptotic-no-moment}.]
If \eqref{eq:no-uniform-rate-unit-mass} were true, set \(\phi(t):=C\psi(t)\).
By Proposition~\ref{prop:no-universal-rate-lattice}, there exist
\(u_0\in\ell^1(\mathbb Z^d)\), \(u_0\ge0\), \(\sum_x u_0(x)=1\), and \(t_k\to\infty\)
such that
\[
t_k^{\frac d{2s}}
\|S_s(t_k)u_0-G_{t_k}^{(s)}\|_{\ell^\infty}
\ge k\,\phi(t_k)=k\,C\,\psi(t_k).
\]
But \eqref{eq:no-uniform-rate-unit-mass} gives the upper bound
\[
t_k^{\frac d{2s}}
\|S_s(t_k)u_0-G_{t_k}^{(s)}\|_{\ell^\infty}
\le C\,\psi(t_k),
\]
a contradiction for \(k\ge2\). Hence no such \(C\) exists.
\end{proof}

\section{Fractional Dirichlet Diffusion on Finite Subgraphs}
\label{sec:finite-subgraph-dirichlet}

In this section, we formulate and analyze the long-time behavior of fractional diffusion on a finite subgraph with homogeneous Dirichlet boundary condition. In contrast with the whole-lattice case, the finite-domain Dirichlet problem is governed by discrete spectrum and exhibits exponential decay toward the first eigenvalue.
\subsection{Setting and Equation}
\label{Setting and Equation}

Let $G = (V, E, \mu, w)$ is a weighted, connected graph, where $\mu$ is a positive measure, and $w : V \times V \to [0,\infty)$ is a symmetric edge function satisfying $w_{xy} = w_{yx},w_{xx} = 0$ and 
\[\sum_{y\in V}w_{xy}<\infty \quad \text{for all}\:\:x\in V.\]

Fix a nonempty finite connected subset \(\Omega\subset V\). Denote
\[
\partial\Omega:=\{x\in V\setminus\Omega:\exists\,y\in\Omega,\ x\sim y\},
\qquad
\overline\Omega:=\Omega\cup\partial\Omega.
\]

For \(u:\overline\Omega\to\mathbb R\), define the weighted graph Laplacian on \(\Omega\) by
\[
(\Delta_\Omega u)(x):=\frac1{\mu(x)}\sum_{y\in V}w_{xy}\bigl(u(y)-u(x)\bigr),
\qquad x\in\Omega,
\]
together with the Dirichlet exterior condition
\[
u(x)=0,\qquad x\in V\setminus\Omega.
\]
Equivalently, define the positive Dirichlet operator
\[
L_{\Omega,1}^D:=-\Delta_\Omega
\quad\text{on }\ell^2(\Omega,m).
\]

\medskip
\noindent
Given \(u:\Omega\to\mathbb R\), let \(\widetilde u\) be its zero extension:
\[
\widetilde u(x):=
\begin{cases}
u(x),&x\in\Omega,\\
0,&x\notin\Omega.
\end{cases}
\]
For \(s\in(0,1)\), define
\[
L_{\Omega,s}^D u
:=(-\Delta)^s_{\Omega}u
:=\Big((-\Delta)^s\widetilde u\Big)\Big|_{\Omega},
\qquad s\in(0,1),
\]
where \((-\Delta)^s\) is the fractional Laplacian on the whole graph
(see, e.g., \cite{chen2025logarithmic} for the corresponding nonlocal framework on general graphs).
Thus, for \(s\in(0,1]\), we treat
\[
L_{\Omega,s}^D=
\begin{cases}
(-\Delta)^s_{\Omega},& s\in(0,1),\\
-\Delta_\Omega,& s=1,
\end{cases}
\]
as a unified Dirichlet diffusion operator on \(\ell^2(\Omega,m)\).

We consider
\begin{equation}
\begin{cases}
\partial_t u(t,x)+L_{\Omega,s}^D u(t,x)=0,& t>0,\ x\in\Omega,\\
u(t,x)=0,& t>0,\ x\in V\setminus\Omega,\\
u(0,x)=u_0(x),& x\in\Omega.
\end{cases}
\end{equation}

Since \(\Omega\) is finite, for each \(s\in(0,1]\), \(L_{\Omega,s}^D\) is a symmetric positive
\(|\Omega|\times|\Omega|\) matrix on \(\ell^2(\Omega,m)\). Hence there exists an orthonormal eigenbasis
\(\{(\mu_{k,s},\psi_{k,s})\}_{k=1}^{|\Omega|}\) such that
\[
0<\mu_{1,s}<\mu_{2,s}\le\cdots\le\mu_{|\Omega|,s},
\qquad
L_{\Omega,s}^D\psi_{k,s}=\mu_{k,s}\psi_{k,s}.
\]

For \(u_0\in\ell^2(\Omega,m)\), the unique solution is
\[
u(t,\cdot)=e^{-tL_{\Omega,s}^D}u_0
=
\sum_{k=1}^{|\Omega|}
e^{-\mu_{k,s}t}\,
\langle u_0,\psi_{k,s}\rangle_{\ell^2(\Omega,m)}\,\psi_{k,s}.
\]
Therefore, as \(t\to\infty\), the asymptotic behavior is governed by
the principal mode \(e^{-\mu_{1,s}t}\psi_{1,s}\).

Moreover, since \(\Omega\) is finite, all spaces \(\ell^p(\Omega,m)\) (\(1\le p\le\infty\)) coincide as vector spaces (namely \(\mathbb{R}^{|\Omega|}\)); only the norms differ, and they are mutually equivalent.

     \subsection{Large-time Asymptotic Convergence }
We now turn to the long-time regime for the Dirichlet problem on a finite connected subgraph \(\Omega\).  
In contrast with the whole-lattice case \(\mathbb Z^d\), where large-time behavior is governed by  polynomial decay, the finite-domain Dirichlet dynamics is purely spectral: the semigroup \(e^{-tL_{\Omega,s}^D}\) is generated by a positive self-adjoint matrix with discrete eigenvalues.  Hence the first eigenvalue/eigenfunction pair provides the leading profile, while all higher modes decay exponentially faster.  
The goal of this subsection is to quantify this first-mode asymptotic expansion in \(\ell^p(\Omega,m)\), with explicit exponential rate given by the spectral gap.

\begin{proof}[\bf Proof of Theorem \ref{eq:finite-subgraph-renormalized-limit}.]
By the spectral representation of \(L_{\Omega,s}^D\),
\[
u(t)=\sum_{k=1}^{|\Omega|}
e^{-\mu_{k,s}t}\,
\langle u_0,\psi_{k,s}\rangle_{\ell^2(\Omega,m)}\,\psi_{k,s}.
\]
Subtract the principal mode:
\[
R(t):=
u(t)-e^{-\mu_{1,s}t}\langle u_0,\psi_{1,s}\rangle_{\ell^2(\Omega,m)}\,\psi_{1,s}
=
\sum_{k=2}^{|\Omega|}
e^{-\mu_{k,s}t}\,
\langle u_0,\psi_{k,s}\rangle_{\ell^2(\Omega,m)}\,\psi_{k,s}.
\]
Using orthonormality in \(\ell^2(\Omega,m)\),
\[
\|R(t)\|_{\ell^2(\Omega,m)}^2
=
\sum_{k=2}^{|\Omega|}
e^{-2\mu_{k,s}t}\,
\big|\langle u_0,\psi_{k,s}\rangle_{\ell^2(\Omega,m)}\big|^2
\le
e^{-2\mu_{2,s}t}
\sum_{k=2}^{|\Omega|}
\big|\langle u_0,\psi_{k,s}\rangle_{\ell^2(\Omega,m)}\big|^2
\le
e^{-2\mu_{2,s}t}\|u_0\|_{\ell^2(\Omega,m)}^2.
\]
Hence
\begin{equation}
\label{eq:R-L2-consistent}
\|R(t)\|_{\ell^2(\Omega,m)}
\le
e^{-\mu_{2,s}t}\|u_0\|_{\ell^2(\Omega,m)}.
\end{equation}

Since \(\Omega\) is finite, all \(\ell^p(\Omega,m)\)-norms are equivalent. Therefore, for each
\(1\le p\le\infty\), there exists \(C_{p,\Omega}>0\) such that
\[
\|f\|_{\ell^p(\Omega,m)}\le C_{p,\Omega}\|f\|_{\ell^2(\Omega,m)},
\qquad \forall f:\Omega\to\mathbb R.
\]
Applying this to \(f=R(t)\) and using \eqref{eq:R-L2-consistent}, we obtain
\[
\|R(t)\|_{\ell^p(\Omega,m)}
\le
C_{p,\Omega}e^{-\mu_{2,s}t}\|u_0\|_{\ell^2(\Omega,m)},
\]
which is exactly \eqref{eq:finite-subgraph-main-est}.

Multiplying \eqref{eq:finite-subgraph-main-est} by \(e^{\mu_{1,s}t}\) gives
\[
\left\|
e^{\mu_{1,s} t}u(t)-\langle u_0,\psi_{1,s}\rangle_{\ell^2(\Omega,m)}\,\psi_{1,s}
\right\|_{\ell^p(\Omega,m)}
\le
C_{p,\Omega}e^{-(\mu_{2,s}-\mu_{1,s})t}\|u_0\|_{\ell^2(\Omega,m)},
\]
namely \eqref{eq:finite-subgraph-renormalized-limit}. The convergence follows immediately.
\end{proof}

\begin{coro}
\label{cor:orthogonal-first-mode}
Under the assumptions of Theorem~\ref{thm:finite-subgraph-first-mode}, assume
\[
\langle u_0,\psi_{1,s}\rangle_{\ell^2(\Omega,m)}=0.
\]
Then, for every \(1\le p\le\infty\),
\[
\|u(t)\|_{\ell^p(\Omega,m)}
\le
C_{p,\Omega}\,e^{-\mu_{2,s}t}\,\|u_0\|_{\ell^2(\Omega,m)},
\qquad t\ge0.
\]
\end{coro}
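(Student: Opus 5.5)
The corollary is a direct specialization of Theorem~\ref{thm:finite-subgraph-first-mode}. Under the assumption $\langle u_0,\psi_{1,s}\rangle_{\ell^2(\Omega,m)}=0$, the principal-mode term $e^{-\mu_{1,s}t}\langle u_0,\psi_{1,s}\rangle\psi_{1,s}$ in \eqref{eq:finite-subgraph-main-est} vanishes identically. Hence $u(t)$ coincides with the remainder $R(t)$ from the proof of the theorem, and the bound
\[
\|u(t)\|_{\ell^p(\Omega,m)}=\|R(t)\|_{\ell^p(\Omega,m)}\le C_{p,\Omega}\,e^{-\mu_{2,s}t}\,\|u_0\|_{\ell^2(\Omega,m)}
\]
follows at once for every $1\le p\le\infty$ and $t\ge0$, with the same constant $C_{p,\Omega}$.

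To make the argument self-contained, I would also spell out the spectral step. Using the expansion $u(t)=\sum_{k}e^{-\mu_{k,s}t}\langle u_0,\psi_{k,s}\rangle\psi_{k,s}$, the $k=1$ coefficient is zero, so only the modes $k\ge2$ remain. By orthonormality in $\ell^2(\Omega,m)$ and $\mu_{k,s}\ge\mu_{2,s}$ for $k\ge2$,
\[
\|u(t)\|_{\ell^2}^2=\sum_{k\ge2}e^{-2\mu_{k,s}t}|\langle u_0,\psi_{k,s}\rangle|^2\le e^{-2\mu_{2,s}t}\|u_0\|_{\ell^2}^2,
\]
where the last step uses Bessel's inequality. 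The passage to $\ell^p$ then uses the equivalence of norms on the finite-dimensional space $\ell^p(\Omega,m)\cong\mathbb R^{|\Omega|}$.

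There is no real obstacle here. The only point to check is that the constant $C_{p,\Omega}$ is the same norm-equivalence constant as in the theorem, so the stated inequality holds verbatim.
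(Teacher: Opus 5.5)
Your proof is correct and follows the paper's own argument: the paper likewise notes that the principal mode drops out, so $u(t)$ equals the remainder, and then applies \eqref{eq:finite-subgraph-main-est} with the same constant $C_{p,\Omega}$. Your explicit spectral/Bessel step just repeats the proof of Theorem~\ref{thm:finite-subgraph-first-mode}, and it is fine.
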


\begin{proof}
If \(\langle u_0,\psi_{1,s}\rangle_{\ell^2(\Omega,m)}=0\), the principal mode vanishes in the spectral expansion:
\[
u(t)=\sum_{k=2}^{|\Omega|}
e^{-\mu_{k,s}t}\,
\langle u_0,\psi_{k,s}\rangle_{\ell^2(\Omega,m)}\,\psi_{k,s}.
\]
Equivalently,
\[
u(t)-e^{-\mu_{1,s}t}\langle u_0,\psi_{1,s}\rangle_{\ell^2(\Omega,m)}\psi_{1,s}
=u(t).
\]
Applying \eqref{eq:finite-subgraph-main-est} from
Theorem~\ref{thm:finite-subgraph-first-mode} gives
\[
\|u(t)\|_{\ell^p(\Omega,m)}
\le
C_{p,\Omega}\,e^{-\mu_{2,s}t}\,\|u_0\|_{\ell^2(\Omega,m)}.
\]
\end{proof}

\begin{remark}
\label{rem:finite-vs-lattice}
The large-time mechanism here is fundamentally different from that on \(\mathbb Z^d\). On \(\mathbb Z^d\), the profile is governed by the heat kernel \(G_t^{(s)}\), and decay is
typically polynomial in time, e.g. first-order corrections of size
\(t^{-1/(2s)}\). By contrast, on a finite domain with Dirichlet condition, the generator \(L_{\Omega,s}^D\)
has discrete spectrum with \(\mu_{1,s}>0\), and the solution is asymptotically
one-dimensional:
\[
u(t)\sim e^{-\mu_{1,s}t}\langle u_0,\psi_{1,s}\rangle_{\ell^2(\Omega,m)}\,\psi_{1,s},
\]
with exponential convergence rate
\[
e^{-(\mu_{2,s}-\mu_{1,s})t}
\]
after renormalization by \(e^{\mu_{1,s}t}\), where \(\mu_{2,s}-\mu_{1,s}\) is the spectral gap.
\end{remark}


\section{Positivity Improving of the Semigroup}
\label{sec:positivity-semigroup-comparison}

In this section, we clarify the positivity type of the semigroups,
distinguishing between the whole-lattice problem and the finite-domain Dirichlet problem.

\begin{proof}[\bf Proof of Proposition \ref{prop:zd-positivity-improving}.]
For \(s=1\), \(G_t^{(1)}\) is the continuous-time random-walk heat kernel on \(\mathbb Z^d\),
hence strictly positive at every site for every \(t>0\).

For \(0<s<1\), use subordination:
\[
e^{-t(-\Delta)^s}
=
\int_0^\infty e^{-r(-\Delta)}\,\eta_{t,s}(r)\,dr,
\]
where \(\eta_{t,s}(r)\ge0\), \(\int_0^\infty \eta_{t,s}(r)\,dr=1\), and \(\eta_{t,s}\not\equiv0\).
Therefore
\[
G_t^{(s)}(x)=\int_0^\infty G_r^{(1)}(x)\,\eta_{t,s}(r)\,dr.
\]
Since \(G_r^{(1)}(x)>0\) for all \(r>0\), \(x\in\mathbb Z^d\), the integral is strictly positive.
Then for \(u_0\ge0,\ u_0\not\equiv0\),
\[
(S_s(t)u_0)(x)=\sum_{y\in\mathbb Z^d}G_t^{(s)}(x-y)u_0(y)>0
\]
because at least one \(y\) has \(u_0(y)>0\), and each kernel factor is positive.
\end{proof}

Let \(\Omega\subset V\) be finite and connected, \(s\in(0,1]\), and
\(S_{\Omega,s}(t):=e^{-tL_{\Omega,s}^D}\).

\begin{proof}[\bf Proof of Proposition \ref{prop:finite-dirichlet-positivity-improving}.]
Write \(L_{\Omega,s}^D\) in matrix form:
\[
(L_{\Omega,s}^D u)(x)=\sum_{y\in\Omega} a_{xy}u(y),\qquad x\in\Omega.
\]
For both \(s\in(0,1)\) and \(s=1\), one has
\[
a_{xy}\le 0,\ \ x\neq y.
\]
Hence, $B:=-L_{\Omega,s}^D$ is a Metzler matrix. We now check irreducibility of \(B\).

\smallskip
\noindent
\textbf{(i) Case \(0<s<1\).}
By \cite{chen2025logarithmic}, we have
\begin{align*}
(-\Delta)^s u(x)
=\sum_{y\in V}W_s(x,y)\bigl(u(x)-u(y)\bigr),\quad
W_s(x,y)
=\frac{s}{\Gamma(1-s)}\int_0^\infty p(t,x,y)\,t^{-1-s}\,dt.
\end{align*}

Let \(\Omega\subset V\) be finite, and for \(u:\Omega\to\mathbb R\), define its zero extension
\[
\widetilde u(y)=
\begin{cases}
u(y),& y\in\Omega,\\
0,& y\in V\setminus\Omega.
\end{cases}
\]
Hence, for \(x\in\Omega\),
\begin{align*}
L_{\Omega,s}^D u(x)
&=\sum_{y\in V}W_s(x,y)\bigl(\widetilde u(x)-\widetilde u(y)\bigr)\\
&=\sum_{y\in\Omega}W_s(x,y)\bigl(u(x)-u(y)\bigr)
  +\sum_{y\in V\setminus\Omega}W_s(x,y)\,u(x).
\end{align*}
Write
\[
\kappa_s(x):=\sum_{y\in V\setminus\Omega}W_s(x,y)>0.
\]
Then
\[
L_{\Omega,s}^D u(x)
=\sum_{y\in\Omega,\ y\neq x}W_s(x,y)\bigl(u(x)-u(y)\bigr)+\kappa_s(x)u(x).
\]
Therefore,
\[
a_{xy}=
\begin{cases}
-\;W_s(x,y),& x\neq y,\ x,y\in\Omega,\\[1mm]
\displaystyle \sum_{z\in\Omega,\ z\neq x}W_s(x,z)+\kappa_s(x),& y=x.
\end{cases}
\]
Equivalently,
\[
a_{xx}=\sum_{z\in V,\ z\neq x}W_s(x,z),\qquad
a_{xy}=-W_s(x,y)<0\,\quad x\neq y,\ y\in\Omega.
\]
Thus,
\(B_{xy}>0\) for all \(x\neq y\). In particular, \(B\) is irreducible.

\smallskip
\noindent
\textbf{(ii) Case \(s=1\).}
Then \(L_{\Omega,1}^D\) is the Dirichlet graph Laplacian; \(B_{xy}>0\) iff \(x\sim y\) inside \(\Omega\).
Since \(\Omega\) is connected, the associated directed graph of \(B\) is strongly connected, hence \(B\)
is irreducible.

\smallskip
Therefore, in all cases \(B\) is an irreducible Metzler matrix. By \cite[Proposition 2.1]{Arora2025EventuallyPositive},
\[
e^{-tL_{\Omega,s}^D}=e^{tB}>0 \quad\text{entrywise for every } t>0.
\]
Hence, for \(u_0\ge0\), \(u_0\not\equiv0\), then
\[
\big(e^{-tL_{\Omega,s}^D}u_0\big)(x)
=\sum_{y\in\Omega}(e^{-tL_{\Omega,s}^D})_{xy}\,u_0(y)>0,
\quad \forall x\in\Omega,\ \forall t>0,
\]
which completes the proof.
\end{proof}


		
	 \bigskip\bigskip

\noindent {\bf  Conflicts of interest:} The authors declare that they have no conflicts of interest regarding this work.
\medskip

\noindent {\bf  Data availability:}  This paper has no associated data.\medskip

\noindent{{\bf Acknowledgements:}    
R. Chen is supported by China Scholarship Council,  Liujinxuan [2025] no. 37. Bo Li is supported by NNSF of China (12471094, 12571121), Zhejiang NSF of China (LMS26A010015), and the Qinshen Scholar Program of Jiaxing University.\\

	\printbibliography

	
	\noindent\textit{Rui Chen}: School of Mathematical Sciences, Fudan University,\\[1mm]
	Shanghai 200433,  China\\[1mm]
	Brandenburg University of Technology Cottbus--Senftenberg,\\[1mm]
	Cottbus 03046, Germany\\[1mm]
	\noindent\emph{Email:} \texttt{chenrui23@m.fudan.edu.cn}

            	\vspace{1em}
	\noindent\textit{Bo Li}: College of Data Science, Jiaxing University,\\[1mm]
	Jiaxing 314001,  China\\[1mm]
	\noindent\emph{Email:} \texttt{bli@zjxu.edu.cn}
	 
\end{document}